\def\nb{{\Bbb N}}
\def\rb{{\Bbb R}}
\def\cb{{\Bbb C}}
\theoremstyle{plain}
\newtheorem{theorem}{Theorem}
\newtheorem{proposition}{Proposition}
\newtheorem{lemma}{Lemma}
\begin{document}


%
%
%


\title{\textbf{Reformulation of the Li criterion for the Selberg class }}
\author{Kamel Mazhouda\\
\small Faculty of Sciences of Monastir, Department of Mathematics,\\[-4.8pt]
\small  5000 Monastir, Tunisia \\[-4.8pt]
\small E-mail: Kamel.Mazhouda@fsm.rnu.tn
}
\maketitle
\renewcommand{\thefootnote}{}
\footnote{2010 \emph{Mathematics Subject Classification}: 11M06,11M26, 11M36.}
\footnote{\emph{Key words and phrases}: Selberg class, Li's criterion, Riemann hypothesis.\\ Supported by  the Tunisian-French Grant DGRST-CNRS 14/R 1501}
\



\quad\\[-60pt]
\pagestyle{myheadings}
\markboth{K. Mazhouda}{Reformulation of the Li criterion for Selberg class}

\begin{abstract}
Let $F$ be a function in the Selberg class ${\mathcal S}$ and $a$ be a real number not equal to 1/2. Consider the sum
$$\lambda_{F}(n,a)=\sum_{\rho}\left[1-\left(\frac{\rho-a}{\rho+a-1}\right)^{n}\right],$$
where $\rho$ runs over the non-trivial zeros of $F$. In this paper, we prove that the Riemann hypothesis is equivalent to the positivity of the "modified Li coefficient" $\lambda_{F}(n,a)$, for $n=1,2,..$ and $a<1/2$. Furthermore, we give an explicit arithmetic and asymptotic formula of these coefficients.
\end{abstract}

\section{Introduction}\label{se.1}
The Riemann hypothesis is the subject of several studies and research papers. Most of them provide new reformulations and numerical evidence for this hypothesis . In the literature, there exists various formulations of the Riemann hypothesis. The Li criterion  for the Riemann hypothesis (see. \cite{Li}) is a necessary and sufficient condition that the sequence $$\lambda_{n}=\sum_{\rho}\left[1-\left(1-\frac{1}{\rho}\right)^{n}\right]$$ is non-negative for all $n\in{\nb}$ and where $\rho$ runs over the non-trivial zeros of $\zeta(s)$. This criterion holds for a large class of Dirichlet series so called the  Selberg class as given in \cite{OM1,OM2}. More recently, Omar and Bouanani \cite{O-B} extended the Li criterion
for function fields and established an explicit and asymptotic formula for the Li coefficients.\\

The Selberg class ${\mathcal S}$ \cite{sel}
consists of Dirichlet series
$$F(s)=\sum_{n=1}^{+\infty}\frac{a(n)}{n^{s}}, \qquad Re(s)>1 $$
satisfying the following hypothesis.\\
\begin{itemize}
    \item {\bf   Analytic continuation:} there exists a non negative integer $m$ such \\$(s-1)^{m}F(s)$ is an entire function of finite order.  We denote by  $m_{F}$  the smallest integer  $m$  which satisfies this condition;
    \item {\bf  Functional equation:} for $1\leq j\leq r$, there are positive real numbers $Q_{F},\ \lambda_{j}$ and there are complex numbers $\mu_{j},\ \omega$ with $Re(\mu_{j})\geq0$ and $|\omega|=1$, such that
       $$\phi_{F}(s)=\omega\overline{\phi_{F}(1-\overline{s})}$$
    where
    $$\phi_{F}(s)=F(s)Q_{F}^{s}\prod_{j=1}^{r}\Gamma(\lambda_{j}s+\mu_{j});$$
    \item {\bf Ramanujan hypothesis:} $a(n)=O(n^{\epsilon})$;
    \item {\bf Euler product:} $F(s)$ satisfies
    $$F(s)=\prod_{p}\exp\left(\sum_{k=1}^{+\infty}\frac{b(p^{k})}{p^{ks}}\right)$$
\noindent with suitable coefficients $b(p^{k})$ satisfying $b(p^{k})=O(p^{k\theta})$ for some $\theta<\frac{1}{2}$.
\end{itemize}
 It is  expected that for every function in the Selberg class the analogue of the Riemann hypothesis holds, i.e, that all non trivial (non-real) zeros lie on the critical line $Re(s)=\frac{1}{2}$.
The degree of $F\in{{\mathcal S}}$ is defined by
$$d_{F}=2\sum_{j=1}^{r}\lambda_{j}.$$
 The logarithmic
derivative of $F(s)$  has also the Dirichlet series expression
$$-\frac{F'}{F}(s)=\sum_{n=1}^{+\infty}\Lambda_{F}(n)n^{-s},\qquad Re(s)>1,$$
where $\Lambda_{F}(n)=b(n)\log n$ is the generalized von Mangoldt function. If $N_{F} (T)$ counts the number of zeros of $F(s)\in{\mathcal S}$ in the rectangle   $0\leq Re(s)\leq1$, $0<Im(s)\leq T$ (according to multiplicities) one can show by standard contour integration the formula
\begin{equation}\label{eq.1}N_{F}(T)=\frac{d_{F}}{2\pi}T\log T+c_{1}T+O\left(\log T\right),\end{equation}
where $$c_{1}=\frac{1}{2\pi}(\log q_{F}-d_{F}(\log(2\pi)+1)$$
and
$$ q_{F}=\frac{(2\pi)^{d_{F}}Q_{F}^{2}}{\prod_{j=1}^{r}\lambda_{j}^{-2\lambda_{j}}}$$
in analogy to the Riemann-von Mangoldt formula for Riemann's zeta-function $\zeta(s)$, the prototype of
an element in ${\mathcal S}$. For more details concerning the Selberg class we refer to the surveys of Kaczorowski \cite{kac} and Perelli \cite{pe}.\\

\section{Review on the Li criterion for the Selberg class}\label{sec.2}
Let $F$ be a function in the Selberg class non-vanishing at $s=1$ and let us define the xi-function $\xi_{F}(s)$ by $$\xi_{F}(s)=s^{m_{F}}(s-1)^{m_{F}}\phi_{F}(s).$$
The function $\xi_{F}(s)$ satisfies the functional equation $$\xi_{F}(s)=\omega\overline{\xi_{F}(1-\overline{s})}.$$
The function $\xi_{F}$ is an entire function of order 1. Therefore,  the Hadamard factorization theorem implies that the function $\xi_{F}(s)$ possesses a representation as the product over its zeros
$$\xi_{F}(s)=\xi_{F}(0)e^{b_{F}s}\prod_{\rho}\left(1-\frac{s}{\rho}\right)e^{\frac{s}{\rho}},$$
where $b_{F}=\frac{\xi'_{F}}{\xi_{F}}(0)$. Inserting the following test function (firstly considered by K. Barner \cite{Bar})
$$ G_{s}(x)=\left\{\begin{array}{crll}0,&x>0,\\1/2,&x=0,\\e^{(s-1/2)x},&x<0\end{array}\right.\ \ s\in{\cb},\ \ Re(s)>1$$
into the Weil explicit formula \cite[Proposition]{OM2}, we obtain for $F(s)\in{{\mathcal S}}$ non-vanishing at $s=0$ and for all $s\in{\cb}$ different from zeros of $\xi_{F}(s)$
\begin{equation}\label{eq.2}\frac{\xi'_{F}}{\xi_{F}}(s)=\lim_{T\rightarrow\infty}\sum_{|Im(\rho)|\leq T}\frac{ord\rho}{s-\rho}.\end{equation}
Hence, the function $\xi_{F}(s)$ can be written as \begin{equation}\label{eq.3}\xi_{F}(s)=\xi_{F}(0)\prod_{\rho}\left(1-\frac{s}{\rho}\right),\end{equation}
where the product is over all zeros of $\xi_{F}(s)$ in the order given by  $|Im(\rho)|<T$ for $T \rightarrow\infty$.\\

 Let $\lambda_{F}(n)$, $n\in \mathbb{Z}$, be a sequence of numbers defined by a sum over the non-trivial zeros of $F(s)$ as  $$\lambda_{F}(n)=\sum_{\rho}\left[1-\left(1-\frac{1}{\rho}\right)^{n}\right]$$
where the sum over $\rho$ is $$\sum_{\rho}=\lim_{T\mapsto\infty}\sum_{|Im \rho |\leq T }.$$
These coefficients are well defined, indeed: the function $\xi_{F}$ is an entire function of order one, hence the series $\sum_{\rho}\frac{1}{\rho^{k}}$ converges absolutely for every integer $k\geq2$. From (\ref{eq.2}) and (\ref{eq.3}), one has
$$\sum_{\rho}\frac{1}{\rho}=\lim_{T\mapsto\infty}\sum_{|Im \rho |\leq T }\frac{1}{\rho},$$
then  the series
$$\lambda_{F}(n)=\sum_{k=1}^{n}(-1)^{k-1}\left(_k^{n}\right)\sum_{\rho}\frac{1}{\rho^{k}}$$
exists for every positive integer $n$. Let $\mathcal{Z}$ the multi-set of zeros of $\xi_{F}(s)$ (counted with multiplicity). The multi-set $\mathcal{Z}$ is invariant under the map $\rho \mapsto 1-\overline{\rho}$. We have
$$1-\left(1-\frac{1}{\rho} \right)^{-n}=1-\left(\frac{\rho-1}{\rho} \right)^{-n}=1-\left(\frac{-\rho}{1-\rho}\right)^{-n}=1-\overline{\left(1-\frac{1}{1-\overline{\rho}} \right)^{n}}$$and this gives the symmetry $\lambda_{F}(-n)=\overline{\lambda_{F}(n)}.$ This proves that the $\lambda_{F}(n)$ exists for all integer $n$. Since $\xi_{F}(s)$ is an entire function of order 1, and its zeros lie in the critical strip $0\leq Re(s)\leq1$, we also obtain that the series
$$\sum_{\rho}\frac{1+|Re(\rho)|}{(1+|\rho|)^{2}}$$ is convergent. Now the application of the Lagarias lemma \cite[Lamma 1]{lagarias} to the multi-set ${\mathcal Z}$ of non-trivial zeros of $F(s)$, the series
$$Re\lambda_{F}(n)=\sum_{\rho}Re\left[1-\left(1-\frac{1}{\rho}\right)^{n}\right]$$
converges absolutely for all integer $n$. These coefficients are expressible in terms of power-series coefficients of functions constructed from the $\xi_{F}$-function. For $n\leq -1,$ the Li coefficients $\lambda_{F}(n)$ correspond to the following Taylor expansion at the point $s=1$
$$\frac{d}{dz}\log \xi_{F}\left(\frac{1}{1-z} \right)=\sum_{n=0}^{+\infty} \lambda_{F}(-n-1)z^{n}$$
and for $n \geq1,$ they correspond to the Taylor expression at $s=0$
$$\frac{d}{dz}\log \xi_{F}\left(\frac{-z}{1-z} \right)=\sum_{n=0}^{+\infty} \lambda_{F}(n+1)z^{n}.$$
Using   \cite[Corollary 1]{B-L}, we get the following generalization of the Li criterion for the Riemann hypothesis.
\begin{theorem}\label{th.1} Let $F(s)$ be a function in the Selberg class $\mathcal{S}$ non-vanishing at $s=1.$ Then, all non-trivial zeros of $F(s)$ lie on the line $Re(s)=1/2$ if and only if $Re(\lambda_{F}(n))>0$ for $n=1,2,...$
\end{theorem}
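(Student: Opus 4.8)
The plan is to obtain Theorem~\ref{th.1} by checking that the multiset $\mathcal Z$ of zeros of $\xi_F$ meets the hypotheses of the abstract positivity criterion of Bombieri and Lagarias \cite[Corollary 1]{B-L} and then quoting that criterion; the transfer of this reduction to the Selberg class is essentially the one carried out in \cite{OM1,OM2}. All the structural facts needed have already been assembled in the preceding pages: $\xi_F$ is entire of order $1$, so $\mathcal Z$ (with multiplicities) satisfies $\sum_\rho (1+|Re\,\rho|)(1+|\rho|)^{-2}<\infty$; the functional equation $\xi_F(s)=\omega\,\overline{\xi_F(1-\overline s)}$ makes $\mathcal Z$ invariant under the reflection $\rho\mapsto 1-\overline\rho$; the normalisation $\xi_F(s)=s^{m_F}(s-1)^{m_F}\phi_F(s)$ together with the hypothesis that $F$ does not vanish at $s=1$ gives $\xi_F(0)\,\xi_F(1)\neq 0$, i.e.\ $0,1\notin\mathcal Z$; and the Lagarias lemma, invoked above, guarantees that $Re\,\lambda_F(n)=\sum_\rho Re\bigl[1-(1-1/\rho)^n\bigr]$ is an absolutely convergent series for every $n\ge 1$.

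Next I would record the conformal dictionary between the unit disc and the two half-planes cut out by the critical line. The M\"obius maps $z\mapsto s=\frac{1}{1-z}$ and $z\mapsto s=\frac{-z}{1-z}$ carry $\{|z|<1\}$ onto $\{Re(s)>\tfrac12\}$ and onto $\{Re(s)<\tfrac12\}$ respectively, each sending the unit circle onto the line $Re(s)=\tfrac12$; hence a zero $\rho$ of $\xi_F$ with $Re(\rho)<\tfrac12$ corresponds to a zero of $g(z):=\xi_F\!\bigl(\tfrac{-z}{1-z}\bigr)$ inside the disc, and conversely. Since $\mathcal Z$ is invariant under $\rho\mapsto 1-\overline\rho$, the presence of any zero with $Re(\rho)\neq\tfrac12$ is equivalent to the presence of one with $Re(\rho)<\tfrac12$, so the Riemann hypothesis for $F$ is equivalent to $g$ being zero-free on $|z|<1$. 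The Taylor identity recalled above, $\frac{d}{dz}\log g(z)=\sum_{n\ge 0}\lambda_F(n+1)z^{n}$ — equivalently, near $z=0$, $g(z)/g(0)=\prod_\rho\frac{1-(1-1/\rho)z}{1-z}$ (the zeros ordered as in (\ref{eq.3})) and $\log\bigl(g(z)/g(0)\bigr)=\sum_{n\ge 1}\frac{\lambda_F(n)}{n}z^{n}$ — then exhibits the Li coefficients $\lambda_F(n)$ as exactly the quantities governed by \cite[Corollary 1]{B-L}.

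Feeding these data into \cite[Corollary 1]{B-L} gives the statement of the theorem: for a multiset with the three properties above, $Re\,\lambda_F(n)>0$ for all $n\ge 1$ holds if and only if $Re(\rho)\le\tfrac12$ for every $\rho$, and by the reflection symmetry the latter is the same as $Re(\rho)=\tfrac12$ for every $\rho$, i.e.\ the Riemann hypothesis. For completeness I would also spell out the elementary implication: if all zeros lie on $Re(s)=\tfrac12$ then $|1-1/\rho|=|\rho-1|/|\rho|=1$, so writing $1-1/\rho=e^{i\theta_\rho}$ we get $Re\bigl[1-(1-1/\rho)^n\bigr]=1-\cos(n\theta_\rho)\ge 0$ term by term, and the absolutely convergent sum $Re\,\lambda_F(n)$ is strictly positive because $(1-1/\rho)^n=1$ cannot hold for all of the infinitely many zeros.

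The only genuinely substantial ingredient is the converse implication packaged inside \cite[Corollary 1]{B-L}: if $\mathcal Z$ contains a zero with $Re(\rho)\neq\tfrac12$, then $Re\,\lambda_F(n)\le 0$ for some $n$. This is precisely where the Lagarias lemma is indispensable — both to make sense of the conditionally convergent $\sum_\rho 1/\rho$ that enters $\lambda_F(1)$ and to secure absolute convergence of $Re\,\lambda_F(n)$ — and where one must analyse the behaviour of $g(z)/g(0)=\exp\bigl(\sum_{n\ge 1}\lambda_F(n)z^{n}/n\bigr)$ as $z$ approaches a zero lying in the open disc. Since that analysis is done in \cite{B-L} (and adapted to $\mathcal S$ in \cite{OM1,OM2}), the proof here comes down to the bookkeeping of the two middle paragraphs, namely verifying that the hypotheses of \cite[Corollary 1]{B-L} are satisfied by the zero set of $\xi_F$.
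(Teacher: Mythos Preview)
Your proposal is correct and follows essentially the same approach as the paper: the paper simply assembles the structural facts about $\mathcal Z$ (order-one entire, the convergence of $\sum_\rho(1+|Re\,\rho|)(1+|\rho|)^{-2}$, the symmetry $\rho\mapsto 1-\overline\rho$, and absolute convergence of $Re\,\lambda_F(n)$ via the Lagarias lemma) and then invokes \cite[Corollary 1]{B-L} directly, without further argument. Your write-up is more detailed than the paper's one-line citation, but the strategy is identical; the only point you might tighten is that $\xi_F(0)\neq 0$ is obtained from $\xi_F(1)\neq 0$ via the functional equation $\xi_F(0)=\omega\,\overline{\xi_F(1)}$ rather than directly from the non-vanishing hypothesis at $s=1$.
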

Under the same hypothesis of Theorem \ref{th.1}, the Riemann hypothesis is also equivalent to each of the two following conditions (a) or (b):\\
\textbf{(a)} For each $\epsilon >0,$ there is a positive constant $c(\epsilon)$ such that $$Re(\lambda_{F}(n)) \geq -c(\epsilon) e^{c\epsilon} \ \ for\  all \ n \geq 1$$
\textbf{(b)} The Li coefficients $\lambda_{F}(n)$ satisfy $$\lim_{n\mapsto\infty}|\lambda_{F}(n)|^{1/n}\leq 1.$$
The proof is the same as in \cite[Theorem 2.2]{lagarias}. Next, we recall the following explicit formula for the coefficients $\lambda_{F}(n).$ Let consider the following hypothesis:
\textbf{$\mathcal{H}$ there exists a constant $c>0$ such that $F(s)$ is non-vanishing in the region:}
$$\left\{s=\sigma+it; \ \sigma \geq 1-\frac{c}{\log (Q_{F}+1+|t|)} \right\}.$$
\begin{theorem}\label{th.2} Let $F(s)$ be a function in the Selberg class $\mathcal{S}$ satisfying $\mathcal{H}.$ Then, we have
\begin{eqnarray}\label{eq.4}\lambda_{F}(-n) & = &m_{F}+n\left( \log Q_{F}-\frac{d_{F}}{2}\gamma \right) \nonumber\\
& - &\sum_{l=1}^{n} (_{l}^{l}) \frac{(-1)^{l-1}}{(l-1)!} \lim_{X\mapsto+\infty} \left\{ \sum_{k\leq X} \frac{\Lambda_{F}(k)}{k} (\log k)^{l-1}-\frac{m_{F}}{l}(\log X)^{l} \right\} \nonumber\\
& + &n\sum_{j=1}^{r}\lambda_{j} \left( -\frac{1}{\lambda_{j}+\mu_{j}} + \ \sum_{l=1}^{+\infty} \frac{\lambda_{j}+\mu_{j}}{l(l+\lambda_{j}+\mu_{j})}\right) \nonumber\\
& + &\sum_{j=1}^{r}\sum_{k=2}^{n} (_{k}^{n})(-\lambda_{j})^{k} \sum_{l=0}^{+\infty} \left( \frac{1}{l+\lambda_{j}+\mu_{j}}\right)^{k},
\end{eqnarray}
where $\gamma$ is the Euler constant.
\end{theorem}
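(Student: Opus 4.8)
The plan is to extract $\lambda_{F}(-n)$ from the generating-function identity recalled above,
\[
\frac{d}{dz}\log\xi_{F}\!\left(\frac{1}{1-z}\right)=\sum_{n=0}^{\infty}\lambda_{F}(-n-1)z^{n},
\qquad\text{i.e.}\qquad
\sum_{n\ge 1}\lambda_{F}(-n)z^{n-1}=\frac{1}{(1-z)^{2}}\,\frac{\xi_{F}'}{\xi_{F}}\!\left(\frac{1}{1-z}\right),
\]
by reading off the coefficient of $z^{n-1}$. Writing $\xi_{F}(s)=s^{m_{F}}(s-1)^{m_{F}}F(s)Q_{F}^{s}\prod_{j=1}^{r}\Gamma(\lambda_{j}s+\mu_{j})$ and taking logarithmic derivatives splits the problem into four pieces,
\[
\frac{\xi_{F}'}{\xi_{F}}(s)=\frac{m_{F}}{s}+\left(\frac{m_{F}}{s-1}+\frac{F'}{F}(s)\right)+\log Q_{F}+\sum_{j=1}^{r}\lambda_{j}\,\frac{\Gamma'}{\Gamma}(\lambda_{j}s+\mu_{j}),
\]
the middle bracket being holomorphic at $s=1$ because $F$ has at most a pole of order $m_{F}$ there, while $\mathcal H$ guarantees $\xi_{F}(1)\neq 0$, so the composition with $s=\frac{1}{1-z}$ makes sense near $z=0$. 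Since $s-1=\frac{z}{1-z}$ under this substitution, I will expand each of the four terms of $\frac{1}{(1-z)^{2}}\,\frac{\xi_F'}{\xi_F}\!\bigl(\frac{1}{1-z}\bigr)$ in powers of $z$ separately and then add, repeatedly using the elementary identity $[z^{n-1}]\,\dfrac{z^{m}}{(1-z)^{m+2}}=\binom{n}{m+1}$.

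The three analytic pieces are routine. The term $\frac{m_{F}}{s}$ becomes $\frac{m_{F}}{1-z}=m_{F}\sum_{n\ge 0}z^{n}$, contributing the constant $m_{F}$ to every $\lambda_{F}(-n)$; the term $\log Q_{F}$ becomes $\frac{\log Q_{F}}{(1-z)^{2}}=\log Q_{F}\sum_{n\ge1}nz^{n-1}$, contributing $n\log Q_{F}$. For the archimedean factors I use the digamma expansion $\frac{\Gamma'}{\Gamma}(w)=-\gamma-\frac1w+\sum_{l\ge1}\bigl(\frac1l-\frac1{l+w}\bigr)$ together with the polygamma values $\frac{d^{m}}{dw^{m}}\frac{\Gamma'}{\Gamma}(w)=(-1)^{m+1}m!\sum_{l\ge0}(l+w)^{-m-1}$ to obtain the Taylor expansion of $\lambda_{j}\frac{\Gamma'}{\Gamma}(\lambda_{j}s+\mu_{j})$ at $s=1$. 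Its constant term, multiplied by $n=\binom{n}{1}$ and summed over $j$ with $\sum_{j}\lambda_{j}=\frac{d_{F}}{2}$, yields $n\bigl(-\frac{d_{F}}{2}\gamma\bigr)$ together with $n\sum_{j}\lambda_{j}\bigl(-\frac1{\lambda_{j}+\mu_{j}}+\sum_{l\ge1}\frac{\lambda_{j}+\mu_{j}}{l(l+\lambda_{j}+\mu_{j})}\bigr)$, and the $m$-th Taylor coefficient for $m=k-1\ge1$ contributes, after the binomial identity above, exactly $\sum_{j}\sum_{k=2}^{n}\binom{n}{k}(-\lambda_{j})^{k}\sum_{l\ge0}(l+\lambda_{j}+\mu_{j})^{-k}$. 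These account for all terms of (\ref{eq.4}) except the arithmetic sum on its second line.

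That arithmetic term is the substantive point. Formally, for $\operatorname{Re}(s)>1$ one has $\frac{F'}{F}(s)=-\sum_{k}\Lambda_{F}(k)k^{-s}$ and $k^{-s}=k^{-1}\sum_{l\ge0}\frac{(-(s-1)\log k)^{l}}{l!}$, which suggests that the $(l-1)$-st Taylor coefficient at $s=1$ of $\frac{m_{F}}{s-1}+\frac{F'}{F}(s)$ equals $\frac{(-1)^{l}}{(l-1)!}$ times the regularization of $\sum_{k}\frac{\Lambda_{F}(k)(\log k)^{l-1}}{k}$ by the pole $\frac{m_{F}}{s-1}$. To make this rigorous I will establish the Mertens-type asymptotics
\[
\sum_{k\le X}\frac{\Lambda_{F}(k)(\log k)^{l-1}}{k}=\frac{m_{F}}{l}(\log X)^{l}+\beta_{F,l}+o(1)\qquad(X\to\infty)
\]
by partial summation from the prime-number-theorem estimate $\sum_{k\le X}\Lambda_{F}(k)=m_{F}X+O\!\bigl(Xe^{-c\sqrt{\log X}}\bigr)$, which is the point at which hypothesis $\mathcal H$ enters, through the usual contour shift across $\operatorname{Re}(s)=1$ into the zero-free region together with the standard polynomial bound for $F'/F$ there (cf. \cite{OM1,OM2}). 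The constant $\beta_{F,l}$ is then identified with $(-1)^{l-1}(l-1)!$ times the Taylor coefficient of $\frac{m_{F}}{s-1}+\frac{F'}{F}(s)$, and the contribution of this piece to $[z^{n-1}]$ works out, via $[z^{n-1}]\frac{z^{l-1}}{(1-z)^{l+1}}=\binom{n}{l}$, to $-\sum_{l=1}^{n}\binom{n}{l}\frac{(-1)^{l-1}}{(l-1)!}\lim_{X\to\infty}\bigl[\sum_{k\le X}\frac{\Lambda_{F}(k)}{k}(\log k)^{l-1}-\frac{m_{F}}{l}(\log X)^{l}\bigr]$. Adding the four contributions gives (\ref{eq.4}). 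The only real obstacle is this last step: the Taylor-coefficient bookkeeping is mechanical once the binomial identity is in hand, but the termwise expansion of the divergent series $\sum_{k}\Lambda_{F}(k)k^{-s}$ must be replaced by a genuine analytic argument (partial summation plus the $\mathcal H$-conditional prime number theorem with error term), and the resulting boundary term must be controlled uniformly in $l\le n$.
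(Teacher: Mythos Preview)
Your approach is correct and is essentially the alternative proof the paper sketches in Section~6: expand $\log\xi_{F}$ in a Taylor series at $s=1$ (there at $s=1-a$), split $\xi_{F}'/\xi_{F}$ into the polar, $Q_{F}$, digamma, and $F'/F$ pieces, and extract the $n$-th coefficient via the binomial identity $[z^{n-1}]\,z^{l-1}/(1-z)^{l+1}=\binom{n}{l}$; the Mertens-type limits you isolate are exactly the constants $\eta_{k-1}(F,1-a)$ in equations~\eqref{eq.art2}--\eqref{eq.eta}. The paper's \emph{primary} proof, however---carried out for the generalized Theorem~\ref{th.arith}, which specializes to Theorem~\ref{th.2} at $a=0$---takes a genuinely different route: it plugs a truncated test function $f_{n,X}(x)=e^{(1/2-a)x}\sum_{l=1}^{n}\binom{n}{l}\frac{(1-2a)^{l}}{(l-1)!}x^{l-1}$ on $(-\log X,0)$ into the Weil explicit formula (Proposition~\ref{prop.weil}), evaluates $H_{n,X}(0)$, $H_{n,X}(1)$, the archimedean integrals $I_{1},I_{2}$, and the finite Dirichlet sum, and then lets $X\to\infty$. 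In that approach the regularization of the divergent $\sum_{k}\Lambda_{F}(k)k^{-1}(\log k)^{l-1}$ emerges automatically from the pairing of $m_{F}H_{n,X}(0)$ against the finite prime sum, rather than from a separately proved Mertens asymptotic. Your Taylor-series route is shorter and more transparent once the binomial identity is in hand; the explicit-formula route avoids the separate partial-summation lemma and produces all terms in one stroke, which is why the paper favors it for the $a$-deformation.
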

\noindent {\bf Remark.}  The class of functions from $\mathcal{S}$ satisfying $\mathcal{H}$ is a sub-class of the class considered by Smajlovic \cite{smajl} which will be noted by $\widetilde{{\mathcal S}}$. Indeed, first recall that the Prime Number Theorem for $F(s)$ concerns the asymptotic behavior of the counting function
$$\psi_{F}(x)=\sum_{n\leq x}\Lambda(n)b_{F}(n)=\sum_{n\leq x}\Lambda_{F}(n).$$
It is expected that $\psi_{F}(x)=m_{F}x+o(x)$. In \cite{K-P} Kaczorowski and Perelli shows that the Prime Number Theorem is equivalent to the non-vanishing on the 1-line. The proof is based on a weak zero-density estimate near the 1-line and on a simple almost periodicity argument. Furthermore, if $F$ a function non-vanishing at $Re(s)=1$, then a direct application of the Perron formula \cite{titch} to Dirichlet series $-\frac{F'}{F}$ implies that for any $x > 1$ (not an integer) and $T > 1$ one has
$$\psi_{F}(x)=m_{F}x-\sum_{|Im(\rho)|\leq T}\frac{x^{\rho}}{\rho}+O\left(\frac{x^{1+\epsilon}}{T}\right),$$
for some small $\epsilon> 0$. We let $T\longrightarrow\infty$ and multiply the above formula with $\frac{\log^{l} x}{x}$. Smajlovic \cite{smajl} proved   the following equivalence.
\begin{equation}\label{eq.5}\forall\  l\in{\nb},\ \lim_{x\rightarrow\infty}\log^{l}x\left(m_{F}-\frac{\psi_{F}(x)}{x}\right)=0\ \Longleftrightarrow\ \lim_{x\rightarrow\infty}\log^{l}x\sum_{\rho}\frac{x^{1-\rho}}{\rho}=0.
\end{equation}
Denote $\widetilde{{\mathcal S}}$ the set of all functions $F\in{{\mathcal S}}$, non-vanishing on the line $Re(s) = 1$
and such that (\ref{eq.5}) holds true for all positive integers $l$. Obviously $\widetilde{{\mathcal S}}\subset{\mathcal S}$. From (\ref{eq.5}), we have
$$F\in{\widetilde{{\mathcal S}}} \ \Longleftrightarrow\ F\in{{\mathcal S}}\ \hbox{and}\ \psi_{F}(x)=m_{F}x+o\left(\frac{x}{\log^{l}x}\right)$$
and if a function $F\in{{\mathcal S}}$ has a Landau type zero free region (similar to ${\mathcal H}$ above), then $F\in{\widetilde{{\mathcal S}}}$. To prove the second assertion, note that the Landau type zero free region implies, by standard analytic arguments that the error term in the prime number theorem is $O\left(\exp(-c\sqrt{log x})\right)$, hence (\ref{eq.5}) holds true. {\bf Finally, let us note here that  Theorem \ref{th.2} not hold true only for functions $F\in{{\mathcal S}}$ having a Landau type zero free region but valid for much larger class $\widetilde{{\mathcal S}}$.} Therefore, in Section \ref{sec.3}, we will consider Smajlovic class $\widetilde{{\mathcal S}}$ of $L$-functions for our study on the modified Li criterion and  the modified Li coefficients. \\

An asymptotic formula of the number $\lambda_{F}(n)$ was proved in \cite{OM4} inspired from Lagarias method \cite{lagarias} yields to a sharper error term $O(\sqrt{n}\log n)$.  To do so, we use the arithmetic formula (\ref{eq.4}). Furthermore, we prove that is equivalent to the Riemann hypothesis.
\begin{theorem}\label{th.3} Let $F \in \mathcal{S}.$ Then
$$RH\ \Leftrightarrow\ \lambda_{F}(n)=\frac{d_{F}}{2}n \log n + c_{F}n+ O(\sqrt{n}\log n),$$
where
$$c_{F}=\frac{d_{F}}{2}(\gamma-1)+\frac{1}{2}\log(\lambda Q_{F}^{2}), \ \ \ \lambda=\prod_{j=1}^{r}\lambda_{j}^{2\lambda_{j}}$$
and $\gamma$ is the Euler constant.
\end{theorem}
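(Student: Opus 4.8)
The plan is to prove the two implications separately; the reverse one is immediate. If $\lambda_F(n)=\tfrac{d_F}{2}n\log n+c_Fn+O(\sqrt n\log n)$ then $|\lambda_F(n)|\ll n\log n$, hence $\limsup_{n\to\infty}|\lambda_F(n)|^{1/n}\le 1$, and --- the very definition of the $\lambda_F(n)$ already forcing $F$ to be non-vanishing at $s=1$ --- criterion~(b) of Section~\ref{sec.2} gives RH. For the forward implication I would argue from the explicit formula (\ref{eq.4}): under RH the function $F$ has a wide zero-free region, in particular it satisfies hypothesis~$\mathcal{H}$, so (\ref{eq.4}) applies, and the standard contour argument moreover yields $\psi_F(x)-m_Fx\ll\sqrt x\,(\log x)^2$, which is essentially the only quantitative input from RH used below. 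Since $\lambda_F(-n)=\overline{\lambda_F(n)}$ and $c_F$ is real, it suffices to extract the asymptotics of the right-hand side of (\ref{eq.4}); when $F$ is not self-dual one works instead with $Re\,\lambda_F(n)=\tfrac12(\lambda_F(n)+\lambda_F(-n))$ --- the quantity appearing in Theorem~\ref{th.1} --- so that the $O(n)$-size imaginary parts of the $\mu_j$-dependent terms cancel before the limit is taken.

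I would then estimate the four groups of terms on the right of (\ref{eq.4}) in turn. The term $m_F$ is $O(1)$. The groups $n(\log Q_F-\tfrac{d_F}{2}\gamma)$ and $n\sum_{j=1}^r\lambda_j\big(-\tfrac1{\lambda_j+\mu_j}+\sum_{l\ge1}\tfrac{\lambda_j+\mu_j}{l(l+\lambda_j+\mu_j)}\big)$ are linear in $n$, the inner series being $\gamma+\tfrac{\Gamma'}{\Gamma}(1+\lambda_j+\mu_j)$. The main term comes from the archimedean binomial sum $\sum_{j=1}^r\sum_{k=2}^n\binom nk(-\lambda_j)^k\,\zeta(k,\lambda_j+\mu_j)$ ($\zeta(\cdot,\cdot)$ the Hurwitz zeta function): the identity $\sum_{k\ge2}\zeta(k,\alpha)x^k=x\big(\tfrac{\Gamma'}{\Gamma}(\alpha-x)-\tfrac{\Gamma'}{\Gamma}(\alpha)\big)$ exhibits it as a finite-difference (N\"orlund--Rice) transform of a $\log\Gamma$-type function, and the saddle-point asymptotics of such transforms --- Lagarias' method --- give, for each $j$, a contribution $\lambda_j n\log n+(\text{linear in }n)+O(\sqrt n\log n)$; summing over $j$ with $\sum_{j=1}^r2\lambda_j=d_F$ produces $\tfrac{d_F}{2}n\log n$ plus a linear term plus an admissible error. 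Collecting every linear-in-$n$ piece --- the conductor term, the digamma values, the linear part just obtained, and the $l=1$ term $-n\eta_1$ of the prime sum below --- and simplifying with $\sum_{j=1}^r2\lambda_j=d_F$, Stirling's formula and the Hadamard relation $\lambda_F(1)=\sum_\rho\rho^{-1}$, the Euler constants and digamma values recombine to the asserted $c_F=\tfrac{d_F}{2}(\gamma-1)+\tfrac12\log(\lambda Q_F^2)$ with $\lambda=\prod_{j=1}^r\lambda_j^{2\lambda_j}$.

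What remains is the $l\ge2$ part of the prime sum, $-\sum_{l=2}^n\binom nl\tfrac{(-1)^{l-1}}{(l-1)!}\eta_l$ with $\eta_l=\lim_{X\to\infty}\big\{\sum_{k\le X}\tfrac{\Lambda_F(k)}{k}(\log k)^{l-1}-\tfrac{m_F}{l}(\log X)^l\big\}$, and this is the delicate point. Writing $\psi_F(x)=m_Fx+E(x)$ and integrating by parts gives $\eta_l=-\int_1^\infty E(t)\,\tfrac{(\log t)^{l-2}(l-1-\log t)}{t^2}\,dt$, whence $\eta_l\ll2^l(l+1)!$ under RH; used termwise this is far too lossy (the summand at $l=n$ is already exponential in $n$), so one must exploit the sign $(-1)^{l-1}$: write $\eta_l$ as a contour integral, interchange summation and integration, and estimate the inner sum $\sum_{l=2}^n\binom nl\tfrac{(-1)^{l-1}}{(l-1)!}w^{-l}$ --- a Bessel-type generating polynomial whose size is governed by a saddle point at $|w|\asymp\sqrt n$ --- by moving the contour through that saddle and inserting the RH bound on $E$ there; this bounds the $l\ge2$ prime sum by $O(\sqrt n\log n)$ and finishes the argument. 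I expect the main obstacle to be exactly these saddle-point estimates, for the archimedean binomial sum and the prime sum alike: the effective range is the narrow window $l\asymp\sqrt n$ where the summands are of exponential size in $\sqrt n$, and one has to control the cancellation precisely enough both to collapse the total to $O(\sqrt n\log n)$ and to be certain that nothing of that size leaks into the $n\log n$ or the $n$ coefficient. These points --- and the bookkeeping for non-self-dual $F$ --- are handled by adapting Lagarias' argument \cite{lagarias}, as carried out for the Selberg class in \cite{OM4}.
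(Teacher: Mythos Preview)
Your proposal is correct and follows essentially the same route as the paper: the paper does not spell out a proof of Theorem~\ref{th.3} but cites \cite{OM4} (Lagarias' method) and \cite{maz} (saddle-point via N\"orlund--Rice integrals), and its detailed proof of the analogous Theorem~\ref{th.asymp} proceeds exactly as you outline --- decompose via the arithmetic formula, extract the $n\log n$ and linear terms from the archimedean binomial sum by the N\"orlund--Rice/saddle-point analysis, bound the prime sum by $O(\sqrt n\log n)$ under RH via \cite[Lemma~4.4]{maz}, and for the converse argue that polynomial growth of $\lambda_F(n)$ is incompatible with the exponential blow-up that occurs when RH fails. Your handling of the converse via criterion~(b) and your remarks on the non-self-dual case are consistent with the paper's treatment (which simply assumes $\mu_j$ real ``without loss of generality'' and invokes Theorem~\ref{th.mod} for the exponential growth).
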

\noindent  Theorem \ref{th.3} was proved also in \cite{maz} using the saddle-point method in conjunction
with the theory of the N$\ddot{o}$rlund-Rice integrals.
\section{Reformulation of the Li criterion and Modified Li's coefficients}\label{sec.3}
Let $F\in{\widetilde{{\mathcal S}}}$. Define the "modified Li-coefficients" as follows
$$\lambda_{F}(n,a)=\sum_{\rho\in{Z(F)}}\left[1-\left(\frac{\rho-a}{\rho+a-1}\right)^{n}\right],$$
where the sum over $\rho$ is $$\sum_{\rho}=\lim_{T\mapsto\infty}\sum_{|Im \rho |\leq T }.$$ The last sum is *-convergent for all $n\in{\nb}$ and
$$Re(\lambda_{F}(n,a)=\sum_{\rho\in{Z(F)}}Re\left[1-\left(\frac{\rho-a}{\rho+a-1}\right)^{n}\right]$$
converges absolutely for all $n\in{\nb}$. The multi-set $\mathcal{Z}$ of zeros of $\xi_{F}(s)$ is invariant under the map $\rho \mapsto 1-\overline{\rho}$ implies that  $\lambda_{F}(-n,a)=\overline{\lambda_{F}(n,a)}$. Then $Re\lambda_{F}(-n,a)=Re\lambda_{F}(n,a)$ for all $n\in{\nb}$. To obtain a new reformulation of the Li criterion we need to modify Bombieri-Lagarias Theorem \cite[Theorem 1]{B-L}.
\begin{lemma}\label{lem.mod}{\bf (Modified Bombieri-Lagarias Theorem)}\footnote{If we take $\beta<a$, we just change 2) and 3) as follows :\\
2)	$\sum_{\rho\in{R}}Re\left[1-\left(\frac{\rho-a}{\rho+a-1}\right)^{n}\right]\leq0,$ for all $n=1,2,...$\\
3) For every fixed $\epsilon>0$, there is a constant $c(\epsilon)>0$ such that
	$$\sum_{\rho\in{R}}Re\left[1-\left(\frac{\rho-a}{\rho+a-1}\right)^{n}\right]\leq c(\epsilon)e^{\epsilon\ n},\ \hbox{for all}\ n=1,2,...$$}. Let $\beta$ and $a$ be a real numbers such that $\beta>a$ and $R$ a multiset of complex numbers $\rho$ such that :\\
i) $a-2\beta\notin{R}$,\\
ii) $$\sum_{\rho\in{R}}\frac{1+|Re(\rho)|}{(1+|\rho+a-2\beta|)^{2}}<+\infty.$$

\noindent Then, the following conditions are equivalent
\begin{enumerate}
	\item $Re(\rho)<\beta$\ for all $\rho$.
	\item $\sum_{\rho\in{R}}Re\left[1-\left(\frac{\rho-a}{\rho+a-1}\right)^{n}\right]\geq0,$ for all $n=1,2,...$
	\item For every fixed $\epsilon>0$, there is a constant $c(\epsilon)>0$ such that
	$$\sum_{\rho\in{R}}Re\left[1-\left(\frac{\rho-a}{\rho+a-1}\right)^{n}\right]\geq-c(\epsilon)e^{\epsilon\ n},\ \hbox{for all}\ n=1,2,...$$
\end{enumerate}
\end{lemma}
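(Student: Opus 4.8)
The plan is to reduce the three-fold equivalence to the Bombieri--Lagarias theorem \cite[Theorem 1]{B-L} (which is stated precisely for sums of the shape $\sum_{\rho}\bigl[1-(1-1/\rho)^{n}\bigr]$) by an affine change of variable. The key point is that the fractional-linear expression $\frac{\rho-a}{\rho+a-2\beta}$ occurring in the statement is conjugate, over $\cb$, to the map $w\mapsto 1-1/w$: solving $1-\frac{1}{\rho'}=\frac{\rho-a}{\rho+a-2\beta}$ gives the affine bijection
\[
\rho'=\tau(\rho):=\frac{\rho+a-2\beta}{2(a-\beta)},
\]
so that $1-\bigl(1-\tfrac{1}{\tau(\rho)}\bigr)^{n}=1-\bigl(\tfrac{\rho-a}{\rho+a-2\beta}\bigr)^{n}$ for all $n$ and all $\rho$. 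First I would record the geometry. Since $a,\beta\in\rb$ and $\beta>a$, the points $a$ and $2\beta-a$ are mirror images in the line $Re(s)=\beta$, so $\bigl|(\rho-a)/(\rho+a-2\beta)\bigr|$ is $<1$, $=1$ or $>1$ according as $Re(\rho)<\beta$, $=\beta$ or $>\beta$; and, as $1/(2(a-\beta))<0$, the map $\tau$ sends $\{Re(s)<\beta\}$ bijectively onto $\{Re(s)>1/2\}$ and the line $Re(s)=\beta$ onto $Re(s)=1/2$. Being affine with real coefficients, $\tau$ satisfies $Im(\tau(\rho))=Im(\rho)/(2(a-\beta))$, so the symmetric truncations $|Im(\rho)|\le T$ correspond exactly to the symmetric truncations $|Im(\rho')|\le T'$ on the image side; hence the *-convergence of all the sums involved is carried through $\tau$ without change.

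Next I would check that hypotheses (i)--(ii) are exactly the admissibility conditions of \cite[Theorem 1]{B-L} for the transported multiset $R'=\tau(R)$. The unique preimage $\tau^{-1}(0)$ equals $2\beta-a$, so (i) is the statement $0\notin R'$, which is precisely the condition under which the ordinary Li coefficients of $R'$ are defined. For the growth condition, the relation $\rho+a-2\beta=2(a-\beta)\rho'$ shows that $1+|\rho+a-2\beta|$ is comparable to $1+|\rho'|$, and $1+|Re(\rho)|$ comparable to $1+|Re(\rho')|$, with implied constants depending only on $a,\beta$; therefore
\[
\sum_{\rho\in R}\frac{1+|Re(\rho)|}{(1+|\rho+a-2\beta|)^{2}}<\infty
\quad\Longleftrightarrow\quad
\sum_{\rho'\in R'}\frac{1+|Re(\rho')|}{(1+|\rho'|)^{2}}<\infty,
\]
which is the hypothesis of \cite[Theorem 1]{B-L}. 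By the Lagarias lemma \cite[Lemma 1]{lagarias}, exactly as in Section~\ref{sec.2}, this same bound also secures the absolute convergence of the series of real parts in 2 and 3.

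With these reductions in place the proof is just the transport of \cite[Theorem 1]{B-L} through $\tau$. Applied to $R'$ that theorem gives the equivalence of: $(1')$ $Re(\rho')\ge 1/2$ for all $\rho'\in R'$; $(2')$ $\sum_{\rho'\in R'}Re\bigl[1-(1-1/\rho')^{n}\bigr]\ge 0$ for all $n\ge 1$; and $(3')$ the corresponding lower bound $\ge -c(\epsilon)e^{\epsilon n}$. By the geometry above, $(1')$ holds if and only if $Re(\rho)\le\beta$ for all $\rho\in R$; and since $1-(1-1/\tau(\rho))^{n}=1-((\rho-a)/(\rho+a-2\beta))^{n}$ with matching truncations, $(2')$ and $(3')$ are literally conditions 2 and 3. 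Hence the three conditions are equivalent. (One obtains condition 1 with the closed inequality $Re(\rho)\le\beta$, which is immaterial: on the line $Re(\rho)=\beta$ every summand of 2 already has nonnegative real part.) The footnote case $\beta<a$ is identical, except that now $1/(2(a-\beta))>0$, so $\tau$ carries $\{Re(s)<\beta\}$ onto $\{Re(s)<1/2\}$; applying \cite[Theorem 1]{B-L} to the reflected multiset $\{1-\rho':\rho'\in R'\}$ --- which lies in $\{Re(s)\ge 1/2\}$ exactly when $R'\subset\{Re(s)\le 1/2\}$ --- reverses the inequalities in 2 and 3. I expect no conceptual obstacle here, since all the analytic substance sits inside \cite[Theorem 1]{B-L}; the only delicate steps are the bookkeeping ones --- fixing the orientation of $\tau$ so that $\{Re(s)<\beta\}$ is sent to the half-plane $\{Re(s)>1/2\}$ relevant for Bombieri--Lagarias, and the short comparison of the two series above showing that (i)--(ii) are exactly the admissibility and growth hypotheses of \cite[Theorem 1]{B-L} for $R'$.
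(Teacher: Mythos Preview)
Your proof is correct and follows essentially the same approach as the paper: both rest on the observation that the M\"obius map $\rho\mapsto(\rho-a)/(\rho+a-2\beta)$ behaves with respect to the line $Re(s)=\beta$ exactly as $\rho'\mapsto 1-1/\rho'$ does with respect to $Re(s)=1/2$, so everything reduces to the original Bombieri--Lagarias theorem. The paper simply records the modulus identity $\bigl|(\rho-a)/(\rho+a-2\beta)\bigr|^{2}=1+4(\beta-a)(Re(\rho)-\beta)/|\rho+a-2\beta|^{2}$ and then says ``adapt the proof of \cite[Theorem 1]{B-L}'', whereas you make the reduction explicit via the affine conjugation $\tau(\rho)=(\rho+a-2\beta)/(2(a-\beta))$ and invoke \cite[Theorem 1]{B-L} as a black box; this is a cleaner packaging of the same idea rather than a different route.
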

\noindent{\bf Remark.} If $\rho$ and $\overline{\rho}$ are in $R$, we don't need to take the real part since the expression is real.
\begin{proof} For the proof it suffices to observe that for $\rho=\beta+i\gamma$, one has
$$\left|\frac{\rho-a}{\rho+a-2\beta}\right|=\left|\frac{\beta-a+i\gamma}{a-\beta+i\gamma}\right|=1$$
and for $\rho=\beta'+i\gamma$
$$\left|\frac{\rho-a}{\rho+a-2\beta}\right|^{2}=1+\frac{4(\beta-a)(\beta'-\beta)}{|\rho+a-2\beta|^{2}}.$$
Then, Lemma \ref{lem.mod} follows by adapting Bombieri and Lagarias proof \cite[Theorem 1]{B-L}.
\end{proof}
 Using Lemma \ref{lem.mod}, we deduce a new reformulation of the Li criterion (or the modified Li's criterion).
\begin{theorem}\label{th.mod}\footnote{If we take $\beta<a$, we just change 2) and 3) as follows :\\
2)	$\sum_{\rho\in{R}}Re\left[1-\left(\frac{\rho-a}{\rho+a-1}\right)^{n}\right]\leq0,$ for all $n=1,2,...$\\
3) For every fixed $\epsilon>0$, there is a constant $c(\epsilon)>0$ such that
	$$\sum_{\rho\in{R}}Re\left[1-\left(\frac{\rho-a}{\rho+a-1}\right)^{n}\right]\leq c(\epsilon)e^{\epsilon\ n},\ \hbox{for all}\ n=1,2,...$$}
Let $\beta$ and $a$ be a real numbers such that $\beta>a$ and $R$ a multiset of complex numbers $\rho$ such that :\\
i) $a-2\beta\notin{R}$,\ $-a\notin{R}$,\\
ii) $$\sum_{\rho\in{R}}\frac{1+|Re(\rho)|}{(1+|\rho+a-2\beta|)^{2}}<+\infty$$
and
$$\sum_{\rho\in{R}}\frac{1+|Re(\rho)|}{(1+|\rho-a)^{2}}<+\infty.$$
iii) If $\rho\in{R}$ then $2\beta-\rho\in{R}$ with the same multiplicity as $\rho$.

\noindent Then, the following conditions are equivalent
\begin{enumerate}
	\item $Re(\rho)=\beta$\ for all $\rho$.
	\item $\sum_{\rho\in{R}}Re\left[1-\left(\frac{\rho-a}{\rho+a-1}\right)^{n}\right]\geq0,$ for all $n=1,2,...$
	\item For every fixed $\epsilon>0$, there is a constant $c(\epsilon)>0$ such that
	$$\sum_{\rho\in{R}}Re\left[1-\left(\frac{\rho-a}{\rho+a-1}\right)^{n}\right]\geq-c(\epsilon)e^{\epsilon\ n},\ \hbox{for all}\ n=1,2,...$$
\end{enumerate}
\end{theorem}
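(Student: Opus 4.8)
The plan is to deduce the three‑way equivalence from Lemma~\ref{lem.mod} (the two‑sided Bombieri--Lagarias mechanism) together with the reflection hypothesis iii), after recording how the M\"obius factor $T(\rho):=\dfrac{\rho-a}{\rho+a-2\beta}$ (the quantity appearing in the statement; for $\beta=1/2$ it is $\frac{\rho-a}{\rho+a-1}$) behaves under the relevant substitutions.

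First I would dispose of the elementary bookkeeping. As in the proof of Lemma~\ref{lem.mod}, for $\rho=\beta+i\gamma$ one has $|T(\rho)|=1$, and in general $|T(\rho)|^{2}=1+\dfrac{4(\beta-a)(Re(\rho)-\beta)}{|\rho+a-2\beta|^{2}}$, so, since $\beta>a$, $|T(\rho)|$ is $<1$, $=1$, or $>1$ according as $Re(\rho)<\beta$, $=\beta$, or $>\beta$. A direct computation gives the key identity $T(2\beta-\rho)=T(\rho)^{-1}$. The exclusions $a-2\beta\notin R$ and $-a\notin R$ make $T(\rho)$ and $T(\rho)^{-1}$ finite on $R$, and the two summability conditions in ii) say precisely that both $R$ and its reflection $R^{\ast}:=\{2\beta-\rho:\rho\in R\}$ satisfy the hypothesis of Lemma~\ref{lem.mod} for the parameters $(\beta,a)$ and $(\beta,2\beta-a)$ respectively; in particular the sums below converge absolutely after taking real parts, by the Lagarias lemma used in Section~\ref{sec.2}. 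Finally, since by iii) $R^{\ast}=R$ with multiplicities, reindexing $\rho\mapsto2\beta-\rho$ and using $T(2\beta-\rho)=T(\rho)^{-1}$ gives
$$\sum_{\rho\in R}Re\!\left[1-T(\rho)^{-n}\right]=\sum_{\rho\in R}Re\!\left[1-T(\rho)^{n}\right]\qquad(n\ge1),$$
so the sum in conditions 2) and 3) is invariant under $T\mapsto T^{-1}$.

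With this in hand the equivalences are short. $(1)\Rightarrow(2)$: if $Re(\rho)=\beta$ for every $\rho$ then $|T(\rho)|=1$, hence $Re[1-T(\rho)^{n}]=1-Re\,T(\rho)^{n}\ge1-|T(\rho)|^{n}=0$, and summing gives 2). $(2)\Rightarrow(3)$ is trivial. For $(3)\Rightarrow(1)$: condition 3) is exactly condition 3) of Lemma~\ref{lem.mod} for the multiset $R$ with parameters $\beta>a$, and hypotheses i) and ii) of the theorem furnish the hypotheses of the Lemma, so the Lemma forbids zeros with $Re(\rho)>\beta$, i.e. $Re(\rho)\le\beta$ for all $\rho\in R$ (this is what the argument behind the Lemma yields, and is the reading needed in combination with iii)). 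Now invoke iii): if some $\rho_{0}\in R$ had $Re(\rho_{0})<\beta$, then $2\beta-\rho_{0}\in R$ would satisfy $Re(2\beta-\rho_{0})=2\beta-Re(\rho_{0})>\beta$, a contradiction. Hence $Re(\rho)=\beta$ for every $\rho\in R$, which is 1). Equivalently one may run the argument through the footnote version of Lemma~\ref{lem.mod} applied with $a'=2\beta-a$ (for which $\beta<a'$, its hypotheses supplied by $-a\notin R$ and the second summability condition) to the sum $\sum_{\rho}Re[1-T(\rho)^{-n}]$, which by the displayed identity equals the sum in 3); recording both exclusions and both convergence conditions in i) and ii) is exactly what legitimizes this symmetric route.

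The substantial point is not proved here: it is the content of Lemma~\ref{lem.mod} that conditions 2)/3) force $Re(\rho)\le\beta$ --- the Bombieri--Lagarias argument, where a putative zero with $Re(\rho)>\beta$ has $|T(\rho)|>1$, so $T(\rho)^{n}$ grows, and choosing $n$ along which $\arg T(\rho)^{n}$ is driven toward $\pi$ makes the partial sums drop below any $-c(\epsilon)e^{\epsilon n}$. Granting that Lemma, the only place in the proof of Theorem~\ref{th.mod} where care is genuinely needed is the algebraic identity $T(2\beta-\rho)=T(\rho)^{-1}$ and the check that the reflected data $R^{\ast}$ still satisfy the Lemma's hypotheses; both are routine.
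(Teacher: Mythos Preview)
Your argument is correct and is precisely the route the paper intends: the paper's own proof is simply ``same as in \cite[Theorem~1]{B-L}, so we omit it,'' and what you have written is exactly the Bombieri--Lagarias mechanism---use Lemma~\ref{lem.mod} to exclude $Re(\rho)>\beta$, then invoke the symmetry iii) via the identity $T(2\beta-\rho)=T(\rho)^{-1}$ to exclude $Re(\rho)<\beta$. You were also right to flag that the conclusion of Lemma~\ref{lem.mod} must be read as $Re(\rho)\le\beta$ (not strict), since that is what the Bombieri--Lagarias argument actually delivers and what is needed here.
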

\begin{proof}
The proof is the same as in \cite[Theorem 1]{B-L}, so we omit it.
\end{proof}
Now, we are ready to state a reformulation of the  Li criterion for the Selberg class.
\begin{theorem}\label{th.mod2}\footnote{If we assume $a>1/2$, then the modified Li criterion is written as follows :\\
All non-trivial zeros of $F$ lie on the line $Re(s)=1/2$ if and only if $Re(\lambda_{F}(n,a))\leq0$ for all $n\in{\nb}$.}
Let $a$ be a real number such that $a<1/2$ and $F\in{\widetilde{{\mathcal S}}}$ be a function such that $a\notin{Z(F)}$. Then, all non-trivial zeros of $F$ lie on the line $Re(s)=1/2$ if and only if $Re(\lambda_{F}(n,a))\geq0$ for all $n\in{\nb}$.
\end{theorem}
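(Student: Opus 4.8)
The strategy is to obtain Theorem~\ref{th.mod2} as a special case of the Modified Bombieri--Lagarias Theorem~\ref{th.mod}, applied with $\beta=1/2$. First I would take $R$ to be the multiset $Z(F)$ of non-trivial zeros of $F$ (equivalently the zero set of $\xi_F$) and check the three hypotheses of Theorem~\ref{th.mod} for this $R$ and $\beta=1/2$. Hypothesis~(ii)---the two convergence requirements, now with $\rho+a-1$ and $\rho-a$ in the denominators---is literally the estimate already established in Section~\ref{sec.2}: $\xi_F$ has order one and its zeros lie in the strip $0\le Re(\rho)\le 1$, so translating the argument by the fixed real constant $a$ leaves absolute convergence intact and one invokes Lagarias's lemma exactly as there. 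Hypothesis~(i) asks that $a-1$ and $-a$ be absent from $R$ and that the denominator $\rho+a-1$ never vanish on $R$; since $a<1/2$ the point $a-1<0$ lies outside the critical strip, and the functional equation $\xi_F(s)=\omega\,\overline{\xi_F(1-\bar s)}$ gives $1-a\in Z(F)\Leftrightarrow a\in Z(F)$, so the standing assumption $a\notin Z(F)$ (together with the same symmetry applied to $-a$) secures~(i).

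The one point requiring real care is hypothesis~(iii), that $R$ be invariant under $\rho\mapsto 2\beta-\rho=1-\rho$ with multiplicities. The functional equation supplies only invariance under $\rho\mapsto 1-\bar\rho$, so for $Z(F)$ condition~(iii) amounts to invariance under complex conjugation, which fails for a general element of $\mathcal{S}$ with non-real coefficients. I would circumvent this in the usual way: writing $\overline{F}(s)=\overline{F(\bar s)}$ for the dual function---which again lies in $\widetilde{\mathcal{S}}$ and whose non-trivial zeros are exactly the complex conjugates of those of $F$---one has $Re\,\lambda_{F}(n,a)=\tfrac12\bigl(\lambda_F(n,a)+\overline{\lambda_F(n,a)}\bigr)=\tfrac12\bigl(\lambda_F(n,a)+\lambda_{\overline{F}}(n,a)\bigr)$. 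Hence I apply Theorem~\ref{th.mod} to the symmetrised multiset $R=Z(F)\sqcup Z(\overline{F})$: this $R$ is stable under conjugation and under $\rho\mapsto 1-\bar\rho$, hence under $\rho\mapsto 1-\rho$, so~(iii) holds; conditions~(i) and~(ii) are unaffected; and, since $\rho$ and $\bar\rho$ both belong to $R$, the sum appearing in condition~2 of Theorem~\ref{th.mod} is already real and equals $2\,Re\,\lambda_F(n,a)$. (If one only cares about self-dual $F$, then $R=Z(F)$ works directly and this detour is unnecessary.)

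It then remains to read off the equivalence. Condition~1 of Theorem~\ref{th.mod}, with $\beta=1/2$ and $R=Z(F)\sqcup Z(\overline{F})$, says that every zero of $F$ and of $\overline{F}$ has real part $1/2$; since the critical line is invariant under conjugation, this is exactly the Riemann hypothesis for $F$. Condition~2 says $2\,Re\,\lambda_F(n,a)\ge 0$ for all $n$, that is, $Re\,\lambda_F(n,a)\ge 0$ for all $n\in\mathbb{N}$. Thus the equivalence ``$1\Leftrightarrow 2$'' of Theorem~\ref{th.mod} is precisely the assertion of Theorem~\ref{th.mod2}. Note that the hypothesis $a<1/2$ enters only through the requirement $\beta=1/2>a$ demanded by Theorem~\ref{th.mod}; for $a>1/2$ one instead has $\beta<a$ and the sign reverses, which is the statement recorded in the footnote, while $a=1/2$ is excluded because there the linear fractional map $s\mapsto(s-a)/(s+a-1)$ degenerates. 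The genuine work, as indicated, is confined to the symmetrisation in the second paragraph; everything else is substitution into an already-established abstract criterion.
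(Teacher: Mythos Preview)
Your approach is exactly that of the paper: apply Theorem~\ref{th.mod} with $\beta=1/2$ and $R$ the multiset of non-trivial zeros. The paper's own proof is a single sentence to this effect, so your verification of hypotheses~(i) and~(ii) is simply spelling out what the paper leaves implicit.

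Your second paragraph, however, does more than the paper does. You correctly observe that hypothesis~(iii) of Theorem~\ref{th.mod} requires invariance of $R$ under $\rho\mapsto 1-\rho$, whereas the functional equation of $\xi_F$ only furnishes invariance under $\rho\mapsto 1-\bar\rho$; these coincide only when $Z(F)$ is closed under complex conjugation, i.e.\ essentially when $F$ is self-dual. The paper applies Theorem~\ref{th.mod} directly to $R=Z(F)$ without comment on this point. Your symmetrisation $R=Z(F)\sqcup Z(\overline{F})$, together with the identity $\lambda_{\overline{F}}(n,a)=\overline{\lambda_F(n,a)}$, is the standard and correct way to repair this for general $F\in\widetilde{\mathcal S}$, and it costs nothing since the resulting sum is $2\,\mathrm{Re}\,\lambda_F(n,a)$. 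So your argument is not merely the paper's proof written out in full: it closes a genuine lacuna in the verification of~(iii), at the modest price of doubling the multiset.
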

\begin{proof}
Theorem \ref{th.mod} with $\beta=1/2$ and $R$ is the multiset $Z(F)$ yields to $Re(\rho)=1/2$ if and only if $Re(\lambda_{F}(n,a))\geq0$ for all $n\in{\nb}$.
\end{proof}
\noindent{\bf Remarks.}
\begin{itemize}
	\item From Theorem \ref{th.mod2} and the footnote 3, we don't need to call this criterion as in the origin definition of Li \cite{Li} for the classical zeta function "positivity Li criterion". \\
\item Theorem \ref{th.mod2} can be proved without the use of Theorem \ref{th.mod}. Indeed, let $a<1/2$ and $\rho=\beta+i\gamma$ a non-trivial zero of $F$. Observe that
$$\left|\frac{\rho-a}{\rho+a-1}\right|^{2}=1+\frac{(1-2a)(2\beta-1)}{|\rho+a-1|^{2}}.$$
Therefore, there exist at least $\rho$ such that $\left|\frac{\rho-a}{\rho+a-1}\right|>1$. Since $\frac{(1-2a)(2\beta-1)}{|\rho+a-1|^{2}}$ tends to 0 if $|\rho|$ tends to $\infty$. Then, the maximum over $\rho$ is achieved and the only finitely zeros $\rho_{k}$ such that $|\frac{\rho-a}{\rho+a-1}|=1+t=k=\max$. For the remainder zeros $|\frac{\rho-a}{\rho+a-1}|\leq1+t-\delta$ for some $\delta>0$. Hence
$$1-\left(\frac{\rho_{k}-a}{\rho_{k}+a-1}\right)^{n}=1-(1+t)^{n}e^{in\theta_{k}},$$
for $n$ large. Then, using Dirichlet's theorem, we obtain
{\small\begin{eqnarray} \sum_{\rho}1-\left(\frac{\rho-a}{\rho+a-1}\right)^{n}&=&\sum_{\rho_{k}}1-\left(\frac{\rho_{k}-a}{\rho_{k}+a-1}\right)^{n}+\sum_{\rho\neq\rho_{k}}1-\left(\frac{\rho_{k}-a}{\rho_{k}+a-1}\right)^{n}\nonumber\\
&=&K(1-(1+t)^{n})+O(n^{2}(1+t-\delta)^{n}).\nonumber
\end{eqnarray}}
\item  Using that $$\xi_{F}(s)=\xi_{F}(0)\prod_{\rho\in{Z(F)}}\left(1-\frac{s}{\rho}\right),$$
we can deduce easily that\footnote{Another method to prove equation \eqref{eq.lambda} is to consider the function
$$h(z)=-\frac{n(2a-1)(s-a)^{n-1}}{(s+a-1)^{n+1}}+\frac{n(2a-1)}{(s+a-1)^{2}}$$
and applay the Littlewood Theorem to the integral
$$\int_{C}h(s)\log\xi_{F}(s)ds$$
with $C$ is a rectangular contour with vertices at $\pm T\pm iT$ with real $T$ tends to infty and not coincid with any zero of $\xi_{F}(s)$.}
\begin{eqnarray}\label{eq.lambda}
\lambda_{F}(n,a)&=&\sum_{\rho\in{Z(F)}}\left[1-\left(\frac{\rho-a}{\rho+a-1}\right)^{n}\right]\nonumber\\
&=&\frac{1}{(n-1)!}\frac{d^{n}}{ds^{n}}\left[(s-a)^{n-1}\log\xi_{F}(s)\right]_{s=1-a}.
\end{eqnarray}
Then, we have
\begin{equation}\label{eq.serie}
\frac{d}{ds}\log\xi_{F}\left(\frac{s-a}{s+a-1}\right)=\sum_{n=0}^{\infty}\lambda_{F}(-(n+1),a)(s-a)^{n}.
\end{equation}
As a consequence, we can prove in another way the modified Li criterion on the class $\widetilde{{\mathcal S}}$ by the same argument as in \cite[Theorem 1]{brown}
\end{itemize}
\section{Arithmetic formula for the modified Li coefficients $\lambda_{F}(n,a)$}
In this section, we  obtain an arithmetic formula for $\lambda_{F}(n,a)$. The proof is based on the use of the Weil explicit formula written in the context of the Selberg class with a suitable test function.\\

First, let recall the Weil explicit formula.
\begin{proposition}\cite{OM1,OM2}\label{prop.weil}
 Let $f:\rb\rightarrow\cb$ satisfy the following conditions:
 \begin{itemize}
	\item $f$ is normalized,
	$$2f(x)=f(x+0)+f(x-0),\ \  \ x\in{\rb}.$$
	\item There is a number  $b>0$ such that
	$$V_{\rb}\left(f(x)e^{\left(\frac{1}{2}+b\right)|x|}\right)<\infty,$$
	where $V_{\rb}(.)$ denotes the total variation on $\rb$.
	\item For all $1\leq j\leq r$, the function $G_{j}(x)=f(x)e^{-ix\frac{Im(\mu_{j})}{\lambda_{j}}}$ satisfies
	$$G_{j}(x)+G_{j}(-x)=2f(0)+O(|x|^{\epsilon}), \ \  \epsilon>0.$$	
	\end{itemize}
	 Let $F(s)\in{{\mathcal S}}$. Then,
	{\small \begin{eqnarray}\label{eq.explicit}
	 &&\sum_{\rho}H(\rho)=m_{F}\left(H(0)+H(1)\right)+2f(0)\log Q_{F}\nonumber\\
	 	 &&\ +\ \sum_{j=1}^{r}\int_{0}^{+\infty}\left\{\frac{2\lambda_{j}G_{j}(0)}{x}-	 \frac{e^{\left[\left(1-\frac{\lambda_{j}}{2}
-Re(\mu_{j})\right)\frac{x}{\lambda_{j}}\right]}}{1-e^{-\frac{x}{\lambda_{j}}}}\left(G_{j}(x)+G_{j}(-x)\right)\right\}e^{-\frac{x}{\lambda_{j}}}dx\nonumber\\
&&\ -\ \sum_{n=1}^{\infty}\left[\frac{\Lambda_{F}(n)}{\sqrt{n}}f(\log n)+\frac{\overline{\Lambda_{F}(n)}}{\sqrt{n}}f(-\log n)\right],
	 \end{eqnarray}}
	 where $$H(s)=\int_{-\infty}^{+\infty}f(x)e^{(s-1/2)x}dx \ \ \hbox{et}\ \ \sum_{\rho}H(\rho)=\lim_{T\rightarrow\infty}\sum_{|Im(\rho)|<T}H(\rho).$$
 \end{proposition}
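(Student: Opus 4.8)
The formula is obtained by contour integration of $\dfrac{\xi_F'}{\xi_F}(s)\,H(s)$: the sum over zeros is read off on one side and the arithmetic data on the other. Recall the two descriptions of $\dfrac{\xi_F'}{\xi_F}$. By \eqref{eq.2}--\eqref{eq.3},
$$\frac{\xi_F'}{\xi_F}(s)=\lim_{T\to\infty}\sum_{|Im(\rho)|\le T}\frac{ord\,\rho}{s-\rho},$$
while differentiating $\xi_F(s)=s^{m_F}(s-1)^{m_F}F(s)Q_F^{s}\prod_{j=1}^{r}\Gamma(\lambda_j s+\mu_j)$ gives, to the right of the critical strip,
$$\frac{\xi_F'}{\xi_F}(s)=\frac{m_F}{s}+\frac{m_F}{s-1}+\frac{F'}{F}(s)+\log Q_F+\sum_{j=1}^{r}\lambda_j\frac{\Gamma'}{\Gamma}(\lambda_j s+\mu_j).$$
The plan is to integrate $\dfrac{1}{2\pi i}\dfrac{\xi_F'}{\xi_F}(s)H(s)\,ds$ around the rectangle with vertical sides $Re(s)=1+b'$ and $Re(s)=-b'$ (for some $0<b'<b$) and horizontal sides $Im(s)=\pm T$, and to let $T\to\infty$ along ordinates free of zeros. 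The three conditions on $f$ are precisely what make $H(s)$ holomorphic and sufficiently decaying in the strip $|Re(s)-1/2|<1/2+b$, so that the horizontal segments contribute $o(1)$ and, by the residue theorem together with \eqref{eq.2}, the contour integral tends to $\sum_\rho H(\rho)$.

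One then evaluates the vertical integrals from the arithmetic form. On $Re(s)=1+b'$ insert $-\dfrac{F'}{F}(s)=\sum_{n\ge1}\Lambda_F(n)n^{-s}$; using $H(s)=\int_{-\infty}^{+\infty}f(x)e^{(s-1/2)x}dx$ and integrating term by term, Fourier inversion in the vertical direction collapses $\dfrac{1}{2\pi i}\int_{(1+b')}n^{-s}H(s)\,ds$ to $n^{-1/2}f(\log n)$, which yields $-\sum_{n\ge1}\dfrac{\Lambda_F(n)}{\sqrt n}f(\log n)$. For the line $Re(s)=-b'$ one uses the functional equation in logarithmic-derivative form, $\dfrac{\xi_F'}{\xi_F}(s)=-\overline{\dfrac{\xi_F'}{\xi_F}(1-\overline{s})}$, and substitutes $s\mapsto 1-\overline{s}$ to return to $Re(s)=1+b'$; the sign change in the exponent of $H$ replaces $f(\log n)$ by $f(-\log n)$ and produces the conjugate term $-\sum_{n\ge1}\dfrac{\overline{\Lambda_F(n)}}{\sqrt n}f(-\log n)$. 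The constant $\log Q_F$, integrated over both lines, gives $2f(0)\log Q_F$, since $\dfrac{1}{2\pi i}\int_{(c)}H(s)\,ds=f(0)$ by the same inversion and the normalization of $f$ at the jump, while a careful accounting of the poles lying inside the contour — those of $F'/F$, of the $\Gamma$-factors, and of the polynomial factor $s^{m_F}(s-1)^{m_F}$ — produces the term $m_F\big(H(0)+H(1)\big)$, the $\Gamma$-factor poles being subsumed into the archimedean integral below.

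For the archimedean term $\sum_{j}\lambda_j\dfrac{\Gamma'}{\Gamma}(\lambda_j s+\mu_j)$ one uses the classical representation $\dfrac{\Gamma'}{\Gamma}(z)=\displaystyle\int_0^{\infty}\Big(\dfrac{e^{-u}}{u}-\dfrac{e^{-zu}}{1-e^{-u}}\Big)\,du$, valid for $Re(z)>0$ and hence on the chosen lines since $\lambda_j>0$ and $Re(\mu_j)\ge0$; substituting it, interchanging the $u$- and $s$-integrations (legitimate by absolute convergence on the contours), and performing the $s$-integration by the same Fourier inversion picks out $f$ at the points $\pm\lambda_j u$. After the substitution $x=\lambda_j u$ and after combining the $x>0$ contribution from the right line with the $x<0$ contribution from the left line, the combination $G_j(x)+G_j(-x)$ emerges with weight $\dfrac{e^{[(1-\lambda_j/2-Re(\mu_j))x/\lambda_j]}}{1-e^{-x/\lambda_j}}e^{-x/\lambda_j}$, the twist $e^{-ix\,Im(\mu_j)/\lambda_j}$ having been absorbed into $G_j$, and the $u$-independent piece $e^{-u}/u$ of the representation supplies exactly the subtracted term $\dfrac{2\lambda_j G_j(0)}{x}e^{-x/\lambda_j}$; the hypothesis $G_j(x)+G_j(-x)=2f(0)+O(|x|^{\epsilon})$ renders the resulting bracket integrable at $x=0$. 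Collecting the zero-sum on one side and the residue, conductor, archimedean and Dirichlet-coefficient contributions on the other gives \eqref{eq.explicit}. The steps requiring genuine care — rather than conceptual obstacles — are: the uniform control of $\dfrac{\xi_F'}{\xi_F}(s)H(s)$ on the horizontal segments as $T\to\infty$, which rests on the bound $N_F(T+1)-N_F(T)=O(\log T)$ implicit in \eqref{eq.1} together with the decay of $H$ furnished by the variation hypothesis on $f$; the precise bookkeeping of the poles and zeros inside the contour; and the matching of constants in the digamma computation so that the weight comes out in the displayed normalized form.
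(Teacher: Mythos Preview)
The paper does not actually prove this proposition: it is quoted from \cite{OM1,OM2}, and the text immediately following it simply says ``For more details about the proof of the above proposition, see for example the paper \cite{Bar} of Barner.'' So there is no in-paper proof to compare your sketch against; the result is being \emph{recalled}, not established.

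That said, your outline is the standard Barner--Weil argument and is essentially what one finds in \cite{Bar}: integrate $\frac{1}{2\pi i}\frac{\xi_F'}{\xi_F}(s)H(s)\,ds$ over a tall rectangle, read off the zero-sum by residues, and unfold the two vertical lines using the Dirichlet series, the functional equation, and the integral representation of the digamma function. The three hypotheses on $f$ are used exactly as you indicate. One point where your wording is slightly misleading: you speak of ``the poles lying inside the contour --- those of $F'/F$, of the $\Gamma$-factors, and of the polynomial factor'' as producing $m_F(H(0)+H(1))$. In fact, when you integrate $\frac{\xi_F'}{\xi_F}H$ the only interior residues are at the zeros $\rho$ of $\xi_F$; the term $m_F(H(0)+H(1))$ arises on the \emph{arithmetic} side, from the summands $\frac{m_F}{s}$ and $\frac{m_F}{s-1}$ when you move those pieces across the strip (equivalently, from the pole of $F$ at $s=1$ and its reflection at $s=0$). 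This is a bookkeeping issue rather than a mathematical error, but it is worth stating cleanly since the whole point of the explicit formula is that the residue accounting balances exactly.
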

For more details about the proof of the above proposition, see for example the
paper \cite{Bar} of Barner. Using \cite[Proposition page.146]{Bar} the integral in the sum in the
third term in the right-hand side of (\ref{eq.3}) can be written as follows
$$\frac{1}{\pi}\int_{-\infty}^{+\infty}\frac{\widehat{G_{j}}(x)+\widehat{G_{j}}(-x)}{2}\psi\left(\frac{\lambda_{j}}{2}+Re(\mu_{j})+i\lambda_{j}x\right)dx,$$
where
$$\widehat{G_{j}}(x)=\int_{-\infty}^{+\infty}G_{j}(t)e^{itx}dt$$
is the Fourier transform of $G_{j}$ and $\psi(s)=\frac{\Gamma'}{\Gamma}(s)$.\\

Similar to \cite[Lemma 2]{B-L}, we have the following result.
\begin{lemma}
Let $a$ be a real number. For $n=1,2,...$ let consider the function
$$f_{n}(x)=\left\{\begin{array}{crll}P_{n}(x)&\hbox{if}&-\infty<x<0,\\
\frac{n}{2}(1-2a)&\hbox{if}&x=0,\\
0&\hbox{if}&0<x,
\end{array}\right.$$
where $$P_{n}(x)=e^{(1/2-a)x}\sum_{l=1}^{n}\left(_{l}^{n}\right)\frac{(1-2a)^{l}}{(l-1)!}x^{l-1}.$$
Then
$$H_{n}(s)=\int_{-\infty}^{+\infty}f_{n}(x)e^{(s-1/2)x}dx=1-\left(1+\frac{2a-1}{s-a}\right)^{n}.$$
\end{lemma}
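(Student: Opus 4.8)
The plan is to compute $H_n(s)$ directly: expand the polynomial $P_n$, integrate term by term against $e^{(s-1/2)x}$ on $(-\infty,0)$, and recognize the result as a binomial sum.

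First I would fix $s$ with $\mathrm{Re}(s)>a$. Then, for $x<0$, the integrand equals $e^{(s-a)x}$ times a polynomial in $x$, hence is absolutely integrable on $(-\infty,0)$, and the finite sum defining $P_n$ may be pulled outside the integral. (The prescribed value $f_n(0)=\tfrac n2(1-2a)$ plays no role in the integral; one checks in passing that it equals $\tfrac12 P_n(0^-)$, since only the $l=1$ term of $P_n$ survives at $x=0$.) This reduces $H_n(s)$ to a linear combination of the elementary integrals $\int_{-\infty}^{0}x^{l-1}e^{(s-a)x}\,dx$ for $1\le l\le n$.

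Next I would evaluate each such integral by the substitution $x=-u$ together with the Gamma integral:
$$\int_{-\infty}^{0}x^{l-1}e^{(s-a)x}\,dx=(-1)^{l-1}\int_{0}^{\infty}u^{l-1}e^{-(s-a)u}\,du=\frac{(-1)^{l-1}(l-1)!}{(s-a)^{l}}\qquad(\mathrm{Re}(s)>a).$$
The factor $(l-1)!$ cancels the one in the denominators of the coefficients of $P_n$, so
$$H_n(s)=\sum_{l=1}^{n}\binom{n}{l}(1-2a)^{l}\,\frac{(-1)^{l-1}}{(s-a)^{l}}=-\sum_{l=1}^{n}\binom{n}{l}\left(\frac{2a-1}{s-a}\right)^{l}.$$
Finally, applying the binomial theorem $\sum_{l=0}^{n}\binom{n}{l}w^{l}=(1+w)^{n}$ with $w=\frac{2a-1}{s-a}$ yields $H_n(s)=1-\left(1+\frac{2a-1}{s-a}\right)^{n}$, the claimed identity; since both sides are rational functions of $s$, the equality then extends to all $s\ne a$ by analytic continuation.

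I do not expect a genuine obstacle here: the computation is entirely elementary and parallels Lemma~2 of Bombieri--Lagarias. The only points deserving a word of care are the justification of the term-by-term integration and the need for the restriction $\mathrm{Re}(s)>a$ so that the Gamma integral converges (the formula on the full domain being recovered afterwards by analytic continuation); everything else is bookkeeping with binomial coefficients.
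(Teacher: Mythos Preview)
Your proof is correct and follows precisely the approach the paper has in mind: the paper does not spell out a proof but simply states that the lemma is ``similar to \cite[Lemma 2]{B-L}'', and your direct term-by-term computation via the Gamma integral and the binomial theorem is exactly that argument, carried out in full detail.
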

Note that, for $a=0$, we have  $\lambda_{F}(-n,a)=\lambda_{F}(-n)$ which its arithmetic formula were stated in Theorem \ref{th.2}.
\begin{theorem}\label{th.arith}
Let $F\in{\widetilde{{\mathcal S}}}$. For $a<0$, we have \footnote{When $a=0$, we get $\lambda_{F}(-n,0)=\lambda_{F}(-n)$ as given in \cite[Theorem 2]{OM2} or \cite{smajl}.\\
If we  replace $-a$ by $b$, we can assumed $b> 0$ and this finds Sekatskii's arithmetic formula of the modified coefficients $\lambda_{\zeta}(n,b)$ (see. \cite{sek1}).}
\begin{eqnarray}\label{eq.arith2}
&\lambda_{F}(-n,a)&\nonumber\\&=&m_{F}\left[2-\left(1-\frac{2a-1}{a}\right)^{n}-\left(1+\frac{2a-1}{1-a}\right)^{n}\right]+n(1-2a)\left[\log Q_{F}-\frac{d_{F}}{2}\gamma\right]\nonumber\\
&&\ -\ \sum_{l=1}^{n}\left(_{l}^{n}\right)\frac{(2a-1)^{l}}{(l-1)!}\lim_{X\rightarrow+\infty}\left\{\sum_{k\leq X}\frac{\Lambda_{F}(k)}{k^{1-a}}\log^{l-1}k-\frac{m_{F}(l-1)!}{X^{-a}}\sum_{k=0}^{l-1}\frac{\log^{k}X}{k!(-a)^{l-k}}\right\}\nonumber\\
&&\ +\ n(1-2a)\sum_{j=1}^{r}\lambda_{j}\left(-\frac{1}{\lambda_{j}+\mu_{j}}+\sum_{l=1}^{\infty}\frac{\lambda_{j}+\mu_{j}}{l(l+\lambda_{j}+\mu_{j})}\right)\nonumber\\
&&\ -\ n(1-2a)\sum_{j=1}^{r}\lambda_{j}\sum_{k=0}^{+\infty}(a\lambda_{j})^{k}\sum_{m=0}^{+\infty}\frac{1}{(m+\lambda_{j}+\mu_{j})^{k+1}}\nonumber\\
&&\ +\ \sum_{j=1}^{r}\sum_{l=2}^{n}\left(_{l}^{n}\right)(-(1-2a)\lambda_{j})^{l}\sum_{m=0}^{+\infty}\frac{1}{(m+(1-a)\lambda_{j}+\mu_{j})^{l}}.\nonumber\\
\end{eqnarray}
\end{theorem}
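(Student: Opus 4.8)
The plan is to derive \eqref{eq.arith2} from the Weil explicit formula (Proposition~\ref{prop.weil}) applied to the test function $f_n$ of the preceding Lemma. The starting point is the identity $\lambda_F(-n,a)=\sum_\rho H_n(\rho)$: since $\bigl(\frac{\rho-a}{\rho+a-1}\bigr)^{-n}=\bigl(\frac{\rho+a-1}{\rho-a}\bigr)^{n}$ and $1+\frac{2a-1}{s-a}=\frac{s+a-1}{s-a}$, we get $H_n(\rho)=1-\bigl(\frac{\rho-a}{\rho+a-1}\bigr)^{-n}$, so that $\sum_\rho H_n(\rho)$ is exactly the left-hand side of \eqref{eq.explicit}. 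Before invoking Proposition~\ref{prop.weil} one checks that $f_n$ meets its three hypotheses: normalisation holds because $P_n(0)=n(1-2a)$ and hence $f_n(0)=\tfrac12\bigl(P_n(0)+0\bigr)$; the bounded-variation condition holds since on $(-\infty,0)$ one has $f_n(x)e^{(1/2+b)|x|}=e^{(-a-b)x}\cdot(\text{polynomial in }x)$, of finite total variation whenever $0<b<-a$ --- and this is exactly where the hypothesis $a<0$ enters; and the condition on $G_j(x)+G_j(-x)$ holds trivially because $f_n\equiv 0$ on $(0,+\infty)$ and $f_n$ is smooth on $(-\infty,0]$, whence $G_j(x)+G_j(-x)=P_n(0)+O(|x|)=2f_n(0)+O(|x|)$.

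Then I would substitute $f=f_n$, $H=H_n$ into \eqref{eq.explicit} and match the five groups of terms on its right-hand side with the five blocks of \eqref{eq.arith2}. The polar term $m_F\bigl(H_n(0)+H_n(1)\bigr)$ gives the first line, using $\frac{s+a-1}{s-a}\big|_{s=0}=1-\frac{2a-1}{a}$ and $\frac{s+a-1}{s-a}\big|_{s=1}=1+\frac{2a-1}{1-a}$; the term $2f_n(0)\log Q_F=n(1-2a)\log Q_F$ gives the $\log Q_F$ part of the second line. In the arithmetic sum $-\sum_k\bigl[\frac{\Lambda_F(k)}{\sqrt k}f_n(\log k)+\frac{\overline{\Lambda_F(k)}}{\sqrt k}f_n(-\log k)\bigr]$ only the terms with $f_n(-\log k)$ survive (for $k\ge 2$, $\log k>0$ forces $f_n(\log k)=0$, and $\Lambda_F(1)=0$); substituting $f_n(-\log k)=P_n(-\log k)=k^{a-1/2}\sum_{l=1}^{n}\binom{n}{l}\frac{(1-2a)^{l}(-1)^{l-1}}{(l-1)!}(\log k)^{l-1}$, dividing by $\sqrt k$ and collecting by powers of $\log k$ gives the third line. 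The passage there to the regularised limit (with the $m_F$-correction) is legitimate precisely because $F\in\widetilde{{\mathcal S}}$: a partial-summation argument using $\psi_F(x)=m_F x+o(x)$ (equivalently \eqref{eq.5}) shows that $\sum_{k\le X}\frac{\Lambda_F(k)}{k^{1-a}}(\log k)^{l-1}$ converges as $X\to\infty$ when $a<0$, while the subtracted term is $O\bigl(X^{a}(\log X)^{l-1}\bigr)=o(1)$.

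The main work is the archimedean integral $\sum_j\int_0^\infty\{\cdots\}$ in \eqref{eq.explicit}, which has to produce the $\gamma$-term in the second line together with the last three lines of \eqref{eq.arith2}. I would write this integral through the digamma function $\psi$ as in the display following Proposition~\ref{prop.weil}, then insert the binomial expansion $H_n(s)=-\sum_{k=1}^{n}\binom{n}{k}(2a-1)^{k}(s-a)^{-k}$, splitting off the leading term $\frac{n(1-2a)}{s-a}$ (the $k=1$ part) from the absolutely convergent tail ($k\ge 2$). The tail, via $\psi^{(k-1)}(z)=(-1)^{k}(k-1)!\sum_{m\ge 0}(m+z)^{-k}$ at $z=(1-a)\lambda_j+\mu_j$, yields the last line. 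The $k=1$ term, via $\psi(z)=-\gamma+\sum_{m\ge 0}\bigl(\frac1{m+1}-\frac1{m+z}\bigr)$ and the geometric expansion $\frac1{m+(1-a)\lambda_j+\mu_j}=\frac1{m+\lambda_j+\mu_j}+\sum_{k\ge1}\frac{(a\lambda_j)^{k}}{(m+\lambda_j+\mu_j)^{k+1}}$, produces --- after summing over $j$ and using $d_F=2\sum_j\lambda_j$ --- the $-\,n(1-2a)\frac{d_F}{2}\gamma$ of the second line, then, via the partial fraction $\frac{\lambda_j+\mu_j}{l(l+\lambda_j+\mu_j)}=\frac1l-\frac1{l+\lambda_j+\mu_j}$, the fourth line, the geometric tail giving the fifth. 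Equivalently, and more transparently, one may use the functional equation in the form $\bigl(\frac{\xi_F'}{\xi_F}\bigr)^{(k-1)}(a)=(-1)^{k}\,\overline{\bigl(\frac{\xi_F'}{\xi_F}\bigr)^{(k-1)}(1-a)}$ together with the identity $\lambda_F(-n,a)=\sum_{k=1}^{n}\binom{n}{k}\frac{(2a-1)^{k}}{(k-1)!}\bigl(\frac{\xi_F'}{\xi_F}\bigr)^{(k-1)}(a)$ (immediate from \eqref{eq.2}--\eqref{eq.3} after expanding $\bigl(1+\frac{2a-1}{\rho-a}\bigr)^{n}$) and the decomposition $\frac{\xi_F'}{\xi_F}(s)=\frac{m_F}{s}+\frac{m_F}{s-1}+\frac{F'}{F}(s)+\log Q_F+\sum_j\lambda_j\psi(\lambda_j s+\mu_j)$, evaluating everything at $1-a$, where $Re(1-a)>1$ and the Dirichlet series of $\frac{F'}{F}$ converges.

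I expect the archimedean part to be the principal obstacle. Because $H_n(s)=O(1/s)$ and $\psi(w)=O(\log|w|)$, the relevant integrand is not absolutely integrable along a vertical line and the naive residue sum $\sum_{j,m}H_n\bigl(-(m+\mu_j)/\lambda_j\bigr)$ diverges; it has to be combined with the asymptotics $\psi(w)=\log w+O(1/|w|)$ in order to extract the $\log Q_F$- and $\gamma$-type main terms cleanly, and then one must keep track of the resulting double expansion --- in the summation index $m$ of the $\psi$-series and in the local variable $s-a$ (respectively $s$) --- and of the order of summation, since the $k=0$ slice of the geometric expansion above is individually divergent and only the rearranged $k\ge1$ form is meaningful. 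A secondary but necessary step is to justify exchanging the limit $T\to\infty$ in $\sum_\rho$ with the term-by-term identification; this is where the finiteness of $\sum_\rho\frac{1+|Re(\rho)|}{(1+|\rho|)^{2}}$ (from $\xi_F$ being of order $1$) and the membership $F\in\widetilde{{\mathcal S}}$ are used.
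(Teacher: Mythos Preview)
Your overall strategy --- apply the Weil explicit formula of Proposition~\ref{prop.weil} to the test function $f_n$ of the Lemma and identify the five blocks --- is exactly the paper's approach, and the alternative you sketch at the end (expand $\lambda_F(-n,a)=\sum_{k}\binom{n}{k}\frac{(2a-1)^{k}}{(k-1)!}\bigl(\tfrac{\xi_F'}{\xi_F}\bigr)^{(k-1)}$ at $s=1-a$ and insert the decomposition of $\xi_F'/\xi_F$) is precisely the second proof the paper gives in its Section~6.

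Two points of execution differ. First, the paper does not feed $f_n$ into the explicit formula directly but a truncation $f_{n,X}=f_n\cdot\mathbf{1}_{[-\log X,0]}$ (normalised at the endpoints) and only afterwards lets $X\to\infty$. The $m_F$--correction inside the limit on the third line of \eqref{eq.arith2} then appears naturally as the tail of $H_{n,X}(0)$, rather than as a separately argued regularisation; your remark that for $a<0$ the Dirichlet sum already converges and the subtracted piece is $o(1)$ is correct and makes the truncation dispensable in this range. You should note, however, that the surviving arithmetic sum in \eqref{eq.explicit} carries $\overline{\Lambda_F(k)}$ (not $\Lambda_F(k)$), and the archimedean integral likewise produces $\overline{\mu_j}$: the paper removes the bars at the very end by conjugating via $\lambda_F(-n,a)=\overline{\lambda_F(n,a)}$, a step your write--up omits.

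Second, for the archimedean block the paper never passes through the Fourier--$\psi$ form you propose. It works directly with the integral in \eqref{eq.explicit}, substitutes $f_n(-\lambda_j x)$, changes variables, and uses the integral representation $\psi(z)=\int_0^\infty\bigl(\tfrac{e^{-u}}{u}-\tfrac{e^{-zu}}{1-e^{-u}}\bigr)\,du$ to peel off $n(1-2a)\lambda_j\,\psi(\lambda_j+\overline{\mu_j})$. What remains splits as $n(1-2a)\lambda_j I_1+(1-2a)\lambda_j I_2$ with
\[
I_1=\int_0^\infty\!\bigl(1-e^{a\lambda_j x}\bigr)\frac{e^{-(\lambda_j+\overline{\mu_j})x}}{1-e^{-x}}\,dx,\qquad
I_2=-\sum_{l=2}^{n}\binom{n}{l}\frac{(-(1-2a)\lambda_j)^{l-1}}{(l-1)!}\int_0^\infty\!\frac{x^{l-1}e^{-((1-a)\lambda_j+\overline{\mu_j})x}}{1-e^{-x}}\,dx,
\]
both absolutely convergent because $\mathrm{Re}\bigl((a-1)\lambda_j-\mu_j\bigr)<0$ for $a\le 1$; expanding $(1-e^{-x})^{-1}$ geometrically gives lines four through six of \eqref{eq.arith2} directly. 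This sidesteps the non--absolute--integrability issue you correctly flag for the $k=1$ piece of your binomial decomposition. Your route would also arrive at the answer, but the contour shifts and order--of--summation exchanges you allude to need more care than indicated; the paper's purely real--integral computation avoids them.
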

\begin{proof}
For large $X$  not integer, let
$$f_{n,X}(x)=\left\{\begin{array}{crll}f_{n}(x)&\hbox{if}&-\log X<x<0,\\
\frac{1}{2}f_{n}(-\log X)&\hbox{if}&x=-\log X,\\
\frac{n}{2}(1-2a)&\hbox{if}&x=0,\\
0&&\hbox{otherwise}.
\end{array}\right.$$
Then, the function $f_{n,X}(x)$ satisfies condition of Proposition \ref{prop.weil}. Let
$$H_{n,X}(s)=\int_{-\infty}^{+\infty}f_{n,X}(x)e^{(s-1/2)x}dx.$$
Therefore, we get
$$W_{\lambda_{j},\mu_{j}}=\sum_{j=1}^{r}\int_{0}^{+\infty}\left\{\frac{2\lambda_{j}G_{j,n,X}(0)}{x}-	 \frac{e^{\left[\left(1-\frac{\lambda_{j}}{2}
-Re(\mu_{j})\right)\frac{x}{\lambda_{j}}\right]}}{1-e^{-\frac{x}{\lambda_{j}}}}\left(G_{j,n,X}(x)+G_{j,n,X}(-x)\right)\right\}e^{-\frac{x}{\lambda_{j}}}dx,$$
with
$$G_{j,n,X}(x)=f_{n,X}(x)e^{-ix\frac{Im(\mu_{j})}{\lambda_{j}}}.$$
Using the relation
$$\frac{\Gamma'}{\Gamma}(z)=\int_{0}^{+\infty}\left(\frac{e^{-u}}{u}-\frac{e^{-zu}}{1-e^{-u}}\right)du,$$
we obtain
\begin{eqnarray}\label{eq.I1+I2}
&&\int_{0}^{+\infty}\left\{\frac{2\lambda_{j}G_{j,n,X}(0)}{x}-	 \frac{e^{\left[\left(1-\frac{\lambda_{j}}{2}
-Re(\mu_{j})\right)\frac{x}{\lambda_{j}}\right]}}{1-e^{-\frac{x}{\lambda_{j}}}}\left(G_{j,n,X}(x)+G_{j,n,X}(-x)\right)\right\}e^{-\frac{x}{\lambda_{j}}}dx\nonumber\\
&&\ =\ \lim_{X\rightarrow +\infty}\int_{0}^{\log X}\left\{\frac{2\lambda_{j}f_{n,X}(0)}{x}-	 \frac{e^{\left[\left(1-\frac{\lambda_{j}}{2}
-Re(\mu_{j})\right)\frac{x}{\lambda_{j}}\right]}}{1-e^{-\frac{x}{\lambda_{j}}}}f_{n,X}(-x)e^{ix\frac{Im(\mu_{j})}{\lambda_{j}}}\right\}e^{-\frac{x}{\lambda_{j}}}dx\nonumber\\
&&\ =\ \lim_{X\rightarrow +\infty}\int_{0}^{\log X}\left\{\frac{n(1-2a)\lambda_{j}}{x}-	 \frac{e^{\left[\left(1-\frac{\lambda_{j}}{2}
-\overline{\mu_{j}}\right)\frac{x}{\lambda_{j}}\right]}}{1-e^{-\frac{x}{\lambda_{j}}}}f_{n,X}(-x)\right\}e^{-\frac{x}{\lambda_{j}}}dx\nonumber\\
&&\ =\ \lim_{X\rightarrow +\infty}\lambda_{j}\int_{0}^{\log X}\left\{\frac{n(1-2a)e^{-x}}{x}-	 \frac{e^{-\left(\frac{\lambda_{j}}{2}
+\overline{\mu_{j}}\right)x}}{1-e^{-x}}f_{n,X}(-\lambda_{j}x)\right\}dx\nonumber\\
&&\ =\ n\lambda_{j}(1-2a)\frac{\Gamma'}{\Gamma}(\lambda_{j}+\overline{\mu_{j}})+\lambda_{j}\lim_{X\rightarrow +\infty}\int_{0}^{\frac{\log X}{\lambda_{j}}}\left\{\frac{n(1-2a)e^{-(\lambda_{j}+\overline{\mu_{j}})x}}{1-e^{-x}}-	 \frac{e^{-\left(\frac{\lambda_{j}}{2}
+\overline{\mu_{j}}\right)x}}{1-e^{-x}}f_{n}(-\lambda_{j}x)\right\}dx\nonumber\\
&&\ =\ n\lambda_{j}(1-2a)\frac{\Gamma'}{\Gamma}(\lambda_{j}+\overline{\mu_{j}})\nonumber\\
&&\ \ \ \ \ +\lambda_{j}\int_{0}^{+\infty}\left\{\frac{n(1-2a)e^{-(\lambda_{j}+\overline{\mu_{j}})x}}{1-e^{-x}}-	 \frac{e^{-\left(\frac{\lambda_{j}}{2}
+\overline{\mu_{j}}\right)x}}{1-e^{-x}}\left[e^{-(\frac{1}{2}-a)\lambda_{j}x}\sum_{l=1}^{n}\left(_{l}^{n}\right)\frac{(1-2a)^{l}}{(l-1)!}(-\lambda_{j}x)^{l-1}\right]\right\}dx\nonumber\\
&&\ =\ n\lambda_{j}(1-2a)\frac{\Gamma'}{\Gamma}(\lambda_{j}+\overline{\mu_{j}})\nonumber\\
&&\ \ \ \ \ +\lambda_{j}\int_{0}^{+\infty}\left\{n(1-2a)-	 e^{a\lambda_{j}x}\sum_{l=1}^{n}\left(_{l}^{n}\right)\frac{(1-2a)^{l}}{(l-1)!}(-\lambda_{j}x)^{l-1}\right\}\frac{e^{-(\lambda_{j}+\overline{\mu_{j}})x}}{1-e^{-x}}dx\nonumber\\
&&\ =\ n\lambda_{j}(1-2a)\frac{\Gamma'}{\Gamma}(\lambda_{j}+\overline{\mu_{j}})\nonumber\\
&&\ \ \ \ \ +\lambda_{j}\int_{0}^{+\infty}\left\{n(1-2a)(1-e^{a\lambda_{j}x})-e^{a\lambda_{j}x}\sum_{l=2}^{n}\left(_{l}^{n}\right)\frac{(1-2a)^{l}}{(l-1)!}(-\lambda_{j}x)^{l-1}\right\}\frac{e^{-(\lambda_{j}+\overline{\mu_{j}})x}}{1-e^{-x}}dx\nonumber\\
&&\ =\ n\lambda_{j}(1-2a)\frac{\Gamma'}{\Gamma}(\lambda_{j}+\overline{\mu_{j}})+\lambda_{j}\int_{0}^{+\infty}\left\{n(1-2a)(1-e^{a\lambda_{j}x})\frac{e^{-(\lambda_{j}+\overline{\mu_{j}})x}}{1-e^{-x}}\right\}dx\nonumber\\
&&\ \ \ \ \ +\lambda_{j}(1-2a)\int_{0}^{+\infty}\left\{-\sum_{l=2}^{n}\left(_{l}^{n}\right)\frac{\left(-(1-2a)\lambda_{j}x\right)^{l-1}}{(l-1)!}\right\}
\frac{e^{-((1-a)\lambda_{j}+\overline{\mu_{j}})x}}{1-e^{-x}}dx\nonumber\\
&&\ =\ n\lambda_{j}(1-2a)\frac{\Gamma'}{\Gamma}(\lambda_{j}+\overline{\mu_{j}})+n(1-2a)\lambda_{j}I_{1}+(1-2a)\lambda_{j}I_{2},\nonumber\\
\end{eqnarray}
where
$$I_{1}=\int_{0}^{+\infty}\left\{(1-e^{a\lambda_{j}x})\frac{e^{-(\lambda_{j}+\overline{\mu_{j}})x}}{1-e^{-x}}\right\}dx$$
and
$$I_{2}=\int_{0}^{+\infty}\left\{-\sum_{l=2}^{n}\left(_{l}^{n}\right)\frac{\left(-(1-2a)\lambda_{j}x\right)^{l-1}}{(l-1)!}\right\}
\frac{e^{-((1-a)\lambda_{j}+\overline{\mu_{j}})x}}{1-e^{-x}}dx.$$
In order that the integral $I_{1}$ and $I_{2}$ converges, one needs to assume that $Re((a-1)\lambda_{j}-\mu_{j})<0$. Since the assumption on $\lambda_{j}$ and $\mu_{j}$ posed in the second axiom in the definition of the Selberg class yield that, this is true only if $a\leq1$.\\

\noindent Let start with $I_{2}$. For $a\leq1$, we have
\begin{equation}\label{eq.I2}
I_{2}=\sum_{l=2}^{n}\left(_{l}^{n}\right)(-(1-2a)\lambda_{j})^{l-1}\sum_{m=0}^{+\infty}\frac{1}{(m+(1-a)\lambda_{j}+\overline{\mu_{j}})^{l}}.
\end{equation}
Furthermore, for $a\leq1$, one has,
\begin{eqnarray}\label{eq.I1}
I_{1}&=&\frac{\Gamma'}{\Gamma}(\lambda_{j}(1-a)+\overline{\mu_{j}})-\frac{\Gamma'}{\Gamma}(\lambda_{j}+\overline{\mu_{j}})\nonumber\\
&=&-\sum_{k=1}^{+\infty}\frac{(a\lambda_{j})^{k}}{k!}\int_{0}^{+\infty}x^{k}\frac{e^{-(\lambda_{j}+\overline{\mu_{j}})}}{1-e^{-x}}dx\nonumber\\
&=&-\sum_{k=1}^{+\infty}(a\lambda_{j})^{k}\sum_{m=0}^{+\infty}\frac{1}{(m+\lambda_{j}+\overline{\mu_{j}})^{k+1}}.
\end{eqnarray}
In the other hand, a simple computation yields
\begin{equation}\label{eq.H0}
H_{n,X}(0)=1-\left(1-\frac{2a-1}{a}\right)^{n}+\sum_{l=1}^{n}\left(_{l}^{n}\right)(2a-1)^{l}X^{a}\sum_{k=0}^{l-1}\frac{\log^{k}X}{k!(-a)^{l-k}},
\end{equation}
\begin{eqnarray}\label{eq.H1}
H_{n,X}(1)&=&1-\left(1+\frac{2a-1}{1-a}\right)^{n}+\sum_{l=1}^{n}\left(_{l}^{n}\right)(2a-1)^{l}X^{a-1}\sum_{k=0}^{l-1}\frac{\log^{k}X}{k!(1-a)^{l-k}}\nonumber\\
&=&1-\left(1+\frac{2a-1}{1-a}\right)^{n}+O\left(\frac{\log^{n}X}{X^{1-a}}\right)
\end{eqnarray}
and
\begin{eqnarray}\label{eq.arit}
&&\sum_{n=1}^{\infty}\left[\frac{\Lambda_{F}(n)}{\sqrt{n}}f_{n,X}(\log n)+\frac{\overline{\Lambda_{F}(n)}}{\sqrt{n}}f_{n,X}(-\log n)\right]\nonumber\\&& \ \ =\ \ \sum_{l=1}^{n}\left(_{l}^{n}\right)\frac{(-1)^{l-1}}{(l-1)!}(1-2a)^{l}\sum_{k\leq X}\frac{\overline{\Lambda_{F}(k)}}{k^{1-a}}\log^{l-1}k.
\end{eqnarray}
Inserting equations \eqref{eq.I2} and \eqref{eq.I1} into formula \eqref{eq.I1+I2}, using equations \eqref{eq.H0}, \eqref{eq.H1} and \eqref{eq.arit} and   the explicit formula given by equation \eqref{eq.explicit}, for $a<0$, we get
\begin{eqnarray}\label{eq.arith2}
&&\lim_{X\rightarrow+\infty}\sum_{\rho}H_{n,X}(\rho)\nonumber\\&=&m_{F}\left[2-\left(1-\frac{2a-1}{a}\right)^{n}-\left(1+\frac{2a-1}{1-a}\right)^{n}\right]+n(1-2a)\left[\log Q_{F}-\frac{d_{F}}{2}\gamma\right]\nonumber\\
&&\ -\ \sum_{l=1}^{n}\left(_{l}^{n}\right)\frac{(2a-1)^{l}}{(l-1)!}\lim_{X\rightarrow+\infty}\left\{\sum_{k\leq X}\frac{\overline{\Lambda_{F}(k)}}{k^{1-a}}\log^{l-1}k-m_{F}(l-1)!X^{a}\sum_{k=0}^{l-1}\frac{\log^{k}X}{k!(-a)^{l-k}}\right\}\nonumber\\
&&\ +\ n(1-2a)\sum_{j=1}^{r}\lambda_{j}\left(-\frac{1}{\lambda_{j}+\overline{\mu_{j}}}+\sum_{l=1}^{\infty}\frac{\lambda_{j}+\overline{\mu_{j}}}{l(l+\lambda_{j}+\overline{\mu_{j}})}\right)\nonumber\\
&&\ -\ n(1-2a)\sum_{j=1}^{r}\lambda_{j}\sum_{k=0}^{+\infty}(a\lambda_{j})^{k}\sum_{m=0}^{+\infty}\frac{1}{(m+\lambda_{j}+\overline{\mu_{j}})^{k+1}}\nonumber\\
&&\ +\ \sum_{j=1}^{r}\sum_{l=2}^{n}\left(_{l}^{n}\right)(-(1-2a)\lambda_{j})^{l}\sum_{m=0}^{+\infty}\frac{1}{(m+(1-a)\lambda_{j}+\overline{\mu_{j}})^{l}}.\nonumber\\
\end{eqnarray}
Under condition made above on $F(s)$, since  $a<0$, we prove by the same argument as in \cite[page. 56 and 57]{OM1} that
$$\lim_{X\rightarrow+\infty}\sum_{\rho}H_{n,X}(\rho)=\sum_{\rho}H(\rho)=\lambda_{F}(n,a).$$
Furthermore, recall that $\lambda_{F}(-n,a)=\overline{\lambda_{F}(n,a)}$, then, for $a<0$, we obtain

\begin{eqnarray}\label{eq.arith2}
\lambda_{F}(-n,a)&=&m_{F}\left[2-\left(1-\frac{2a-1}{a}\right)^{n}-\left(1+\frac{2a-1}{1-a}\right)^{n}\right]+n(1-2a)\left[\log Q_{F}-\frac{d_{F}}{2}\gamma\right]\nonumber\\
&&\ -\ \sum_{l=1}^{n}\left(_{l}^{n}\right)\frac{(2a-1)^{l}}{(l-1)!}\lim_{X\rightarrow+\infty}\left\{\sum_{k\leq X}\frac{\Lambda_{F}(k)}{k^{1-a}}\log^{l-1}k-m_{F}(l-1)!X^{a}\sum_{k=0}^{l-1}\frac{\log^{k}X}{k!(-a)^{l-k}}\right\}\nonumber\\
&&\ +\ n(1-2a)\sum_{j=1}^{r}\lambda_{j}\left(-\frac{1}{\lambda_{j}+\mu_{j}}+\sum_{l=1}^{\infty}\frac{\lambda_{j}+\mu_{j}}{l(l+\lambda_{j}+\mu_{j})}\right)\nonumber\\
&&\ -\ n(1-2a)\sum_{j=1}^{r}\lambda_{j}\sum_{k=0}^{+\infty}(a\lambda_{j})^{k}\sum_{m=0}^{+\infty}\frac{1}{(m+\lambda_{j}+\mu_{j})^{k+1}}\nonumber\\
&&\ +\ \sum_{j=1}^{r}\sum_{l=2}^{n}\left(_{l}^{n}\right)(-(1-2a)\lambda_{j})^{l}\sum_{m=0}^{+\infty}\frac{1}{(m+(1-a)\lambda_{j}+\mu_{j})^{l}}.\nonumber\\
\end{eqnarray}
{\bf Remark.}  The arithmetic formula given in Theorem \ref{th.arith} can be written in a simple way as follows
\begin{eqnarray}\label{eq.arith2}
&\lambda_{F}(-n,a)&\nonumber\\&=&m_{F}\left[2-\left(1-\frac{2a-1}{a}\right)^{n}-\left(1+\frac{2a-1}{1-a}\right)^{n}\right]+n(1-2a)\log Q_{F}\nonumber\\
&&\ -\ \sum_{l=1}^{n}\left(_{l}^{n}\right)\frac{(2a-1)^{l}}{(l-1)!}\lim_{X\rightarrow+\infty}\left\{\sum_{k\leq X}\frac{\Lambda_{F}(k)}{k^{1-a}}\log^{l-1}k-\frac{m_{F}(l-1)!}{X^{-a}}\sum_{k=0}^{l-1}\frac{\log^{k}X}{k!(-a)^{l-k}}\right\}\nonumber\\
&&\ +\ n(1-2a)\sum_{j=1}^{r}\psi(\lambda_{j}(1-a)+\mu_{j})\nonumber\\
&&\ +  \sum_{j=1}^{r}\sum_{l=2}^{n}\left(_{l}^{n}\right)(-(1-2a)\lambda_{j})^{l}\sum_{m=0}^{+\infty}\frac{1}{(m+(1-a)\lambda_{j}+\mu_{j})^{l}}.\nonumber
\end{eqnarray}

\end{proof}
\section{An asymptotic formula}
In this section, we use the same argument as in \cite[Theorem 4.1]{maz} which use the saddle point method in conjunction with the N$\ddot{o}$rlund-Rice integral to deduce an asymptotic formula for the modified Li coefficients equivalent to the Riemann hypothesis.
\begin{theorem}\label{th.asymp}
Let $F\in{\widetilde{{\mathcal S}}}$. For $ a<0$, we have
$$RH$$
 $$ \Longleftrightarrow$$
 \begin{eqnarray}\lambda_{F}(n,a)&=&\left(\frac{1}{2}-a\right)d_{F}n\log n\nonumber\\ &&\ +\ \left\{\frac{d_{F}}{2}\left(\gamma-1-\log(1-2a)\right) +\ \frac{1}{2}\log(\lambda Q_{F}^{2})+C_{F}(a)\right\}(1-2a)n \nonumber\\\ &&+\ O\left(\sqrt{n}\log n\right),\nonumber\end{eqnarray}
where
$$C_{F}(a)=\sum_{j=1}^{r}\sum_{k=0}^{+\infty}(a\lambda_{j})^{k}\zeta(k,\lambda_{j}+\mu_{j}),\ \ \lambda=\prod_{j=1}^{r}\lambda_{j}^{2\lambda_{j}},$$
 $\gamma$ is the Euler constant and $\zeta(s, q)$ is the Hurwitz zeta function given by
$$\zeta(s, q) =\sum_{n=0}^{+\infty}\frac{1}{(n + q)^{s}}.$$
\end{theorem}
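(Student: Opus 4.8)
The plan is to deduce the asymptotic formula from the arithmetic formula of Theorem~\ref{th.arith}, following the saddle-point / N\"orlund--Rice strategy of \cite{maz} but now carried out for the parameter-dependent coefficients $\lambda_F(-n,a)$. First I would record that, since $\lambda_F(n,a)=\overline{\lambda_F(-n,a)}$ and the right-hand side of the asymptotic statement is real, it suffices to estimate $\lambda_F(-n,a)$ term by term in the expansion \eqref{eq.arith2}. The constant-type pieces ($m_F[2-(1-\tfrac{2a-1}{a})^n-(1+\tfrac{2a-1}{1-a})^n]$, the $n(1-2a)\log Q_F$ term, and the digamma sum $n(1-2a)\sum_j\psi(\lambda_j(1-a)+\mu_j)$) are already exactly of the shape $(\text{const})\,n+O(1)$; grouping $\sum_j\psi(\lambda_j(1-a)+\mu_j)$ and re-expanding via the series $\psi(z)=-\gamma+\sum_{l\ge1}\bigl(\tfrac1l-\tfrac1{l+z-1}\bigr)$ together with $\log\lambda=\sum_j 2\lambda_j\log\lambda_j$ produces the $\tfrac{d_F}{2}(\gamma-1)+\tfrac12\log(\lambda Q_F^2)$ part of the coefficient of $(1-2a)n$, while the $C_F(a)=\sum_j\sum_k(a\lambda_j)^k\zeta(k,\lambda_j+\mu_j)$ contribution comes precisely from the $I_1$-type double series $-n(1-2a)\sum_j\lambda_j\sum_k(a\lambda_j)^k\sum_m(m+\lambda_j+\mu_j)^{-k-1}$ after rewriting it with the Hurwitz zeta function and absorbing the $k=0$ level. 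So the bookkeeping here is routine once one is careful that $a<0$ keeps every series absolutely convergent.

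The two genuinely $n$-dependent blocks are (i) the archimedean binomial sum $\sum_{j}\sum_{l=2}^{n}\binom{n}{l}(-(1-2a)\lambda_j)^l\sum_{m\ge0}(m+(1-a)\lambda_j+\mu_j)^{-l}$ and (ii) the arithmetic term $\sum_{l=1}^{n}\binom{n}{l}\tfrac{(2a-1)^l}{(l-1)!}\lim_X\{\sum_{k\le X}\Lambda_F(k)k^{a-1}\log^{l-1}k-\cdots\}$. For (i) I would fix $j$ and recognise the inner sum over $m$ as a Hurwitz zeta value $\zeta(l,(1-a)\lambda_j+\mu_j)$, so the block equals $\sum_j\sum_{l\ge2}\binom{n}{l}(-(1-2a)\lambda_j)^l\zeta(l,(1-a)\lambda_j+\mu_j)$ up to the tail $l>n$ which is $O(1)$; writing $\zeta(l,q)=q^{-l}+\sum_{m\ge1}(m+q)^{-l}$, the $m=0$ piece sums (by the binomial theorem) to a linear-in-$n$ contribution after one extracts the $l=0,1$ terms, and the $m\ge1$ piece is $\sum_m[(1-\tfrac{(1-2a)\lambda_j}{m+(1-a)\lambda_j+\mu_j})^n-1+\cdots]$, which by the standard Lagarias-type bound is $O(\log n)$ (or even smaller), since $|(1-2a)\lambda_j/(m+(1-a)\lambda_j+\mu_j)|<1$ for all $m\ge1$ when $a<0$. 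This mirrors the handling of the gamma-factor contribution in \cite[\S4]{maz}.

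The main obstacle is block (ii), the arithmetic sum, and this is where the N\"orlund--Rice integral and the saddle point enter. After replacing the finite difference $\sum_{l=1}^n\binom{n}{l}\tfrac{(2a-1)^l}{(l-1)!}(\cdots)$ by the contour integral $\tfrac{n!}{2\pi i}\oint \tfrac{g(z)}{z(z-1)\cdots(z-n)}\,dz$ with an appropriate generating function $g$ built from $\sum_k \Lambda_F(k)k^{a-1-z}$ (i.e. essentially $-\tfrac{F'}{F}$ evaluated near $\mathrm{Re}(s)=1-a$, which is $>1$ since $a<0$, so it is given by its Dirichlet series and is holomorphic there), one deforms the contour and evaluates by the saddle point method exactly as in \cite[Theorem~4.1]{maz}. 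Here is the crux: the leading $\tfrac{d_F}{2}n\log n$ in the classical case becomes $(\tfrac12-a)d_F\,n\log n$ because the relevant singularity/functional-equation geometry is shifted by the substitution $s\mapsto s-a$, effectively rescaling by the factor $(1-2a)$, and the $-\tfrac{d_F}{2}\log(1-2a)\cdot(1-2a)n$ correction is the secondary term produced by that rescaling in the saddle-point expansion (it is the $\log(1-2a)$ that distinguishes this from naively setting $a=0$). The assumption $F\in\widetilde{\mathcal S}$ is used, via \eqref{eq.5}, to push the $X\to\infty$ limit inside and to control the error from truncating $\psi_F$, and RH is precisely the condition that lets one move the contour past $\mathrm{Re}(s)=\tfrac12-a$ with only a $O(\sqrt n\log n)$ cost — conversely, a zero off the line forces an oscillating term of size exceeding $\sqrt n\log n$, giving the reverse implication. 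I would present the forward direction in full detail and remark that the converse follows verbatim as in \cite{maz,lagarias}.
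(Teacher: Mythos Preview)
Your decomposition into archimedean block (i) and arithmetic block (ii) matches the paper, but you have swapped their roles. In the paper (and in \cite{maz}) the leading term $(\tfrac12-a)d_Fn\log n$ comes entirely from the \emph{archimedean} block (i), i.e.\ from the sum $\sum_j\sum_{l=2}^n\binom{n}{l}(-(1-2a)\lambda_j)^l\zeta(l,(1-a)\lambda_j+\mu_j)$, which is handled by the N\"orlund--Rice / residue method of \cite[Proposition~4.3]{maz}: the residue at $s=1$ produces $n(\tfrac12-a)\lambda_j\bigl(\psi(n)-\psi((1-a)\lambda_j+\mu_j+1)+\log(\tfrac12-a)+\gamma-1\bigr)$, and $\psi(n)\sim\log n$ is what gives the $n\log n$. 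Under RH the arithmetic block (ii) is only $O(\sqrt n\log n)$ (this is \cite[Lemma~4.4]{maz}, adapted); it contributes nothing to the main or secondary terms.

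Concretely, your claim that in block (i) the $m\ge1$ piece is $O(\log n)$ is false. Writing each $m$-term as $(1-x_m)^n-1+nx_m$ with $x_m=(1-2a)\lambda_j/(m+(1-a)\lambda_j+\mu_j)\asymp 1/m$, the range $1\le m\lesssim n$ contributes $\sum_{m\lesssim n}(nx_m-1)\asymp n\log n$; the ``Lagarias-type bound'' does not apply because $\sum_m x_m$ diverges, so the compensation between $(1-x_m)^n-1$ and $nx_m$ is essential and produces the main term, not an error. Conversely, in block (ii) your Dirichlet series $-\tfrac{F'}{F}(s)$ at $\mathrm{Re}(s)=1-a>1$ is holomorphic and has no pole near the contour that could generate an $n\log n$ residue; the saddle-point there only controls size, yielding $O(\sqrt n\log n)$ once RH lets you shift to $\mathrm{Re}(s)=\tfrac12-a$. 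So the ``crux'' paragraph should be relocated to block (i), and block (ii) treated as the error term exactly as in \cite[Lemma~4.4]{maz}.
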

\begin{proof}Without loss of generality, we assume that $\mu_{j}$ is a real number.
First, write the arithmetic formula of $\lambda_{F}(-n,a)$ (equation \eqref{eq.arith2}) as
\begin{eqnarray}\label{eq.arit3}
\lambda_{F}(-n,a)&=&m_{F}\left[2-\left(1-\frac{2a-1}{a}\right)^{n}-\left(1+\frac{2a-1}{1-a}\right)^{n}\right]\nonumber\\
&&\ +\ n(1-2a)\left[\log Q_{F}-\frac{d_{F}}{2}\gamma\right] +\ S_{F}(n,a)+n(1-2a)C_{F}(a)+S_{1}+S_{2},\nonumber\\
\end{eqnarray}
where
$$S_{F}(n,a)=-\ \sum_{l=1}^{n}\left(_{l}^{n}\right)\frac{(2a-1)^{l}}{(l-1)!}\lim_{X\rightarrow+\infty}\left\{\sum_{k\leq X}\frac{\Lambda_{F}(k)}{k^{1-a}}\log^{l-1}k-m_{F}(l-1)!X^{a}\sum_{k=0}^{l-1}\frac{\log^{k}X}{k!(-a)^{l-k}}\right\},$$
$$C_{F}(a)=-\sum_{j=1}^{r}\lambda_{j}\sum_{k=0}^{+\infty}(a\lambda_{j})^{k}\sum_{m=0}^{+\infty}\frac{1}{(m+\lambda_{j}+\mu_{j})^{k+1}},$$
$$S_{1}= n(1-2a)\sum_{j=1}^{r}\lambda_{j}\left(-\frac{1}{\lambda_{j}+\mu_{j}}+\sum_{l=1}^{\infty}\frac{\lambda_{j}+\mu_{j}}{l(l+\lambda_{j}+\mu_{j})}\right)$$
and
$$S_{2}= \sum_{j=1}^{r}\sum_{l=2}^{n}\left(_{l}^{n}\right)(-(1-2a)\lambda_{j})^{l}\sum_{m=0}^{+\infty}\frac{1}{(m+(1-a)\lambda_{j}+\mu_{j})^{l}}.$$
Write the sum $S_{2}$ as follows
$$S_{2}=\sum_{j=1}^{r}I_{j},$$
where
\begin{eqnarray}I_{j}&=&\sum_{l=2}^{n}\left(_{l}^{n}\right)(-(1-2a)\lambda_{j})^{l}\sum_{m=0}^{+\infty}\frac{1}{(m+(1-a)\lambda_{j}+\mu_{j})^{l}}\nonumber\\
&=&\sum_{l=2}^{n}\left(_{l}^{n}\right)(-1)^{l}\frac{\zeta(l,(1-a)\lambda_{j}+\mu_{j})}{\left[\left((1-2a)\lambda_{j}\right)^{-1}\right]^{l}},\nonumber
 \end{eqnarray}
 where $$\zeta(s, q) =\sum_{n=0}^{+\infty}\frac{1}{(n + q)^{s}}$$
 is the Hurwitz zeta function. Using the same notation  of $H_{n}(m,k)$ \cite[Equation 4.1]{maz}, we get
 $$I_{j}=H_{n}\left(\frac{1-a}{1-2a}+\frac{\mu_{j}}{(1-2a)\lambda_{j}},\frac{1}{(1-2a)\lambda_{j}}\right).$$
Now applying \cite[Proposition 4.3]{maz} with $m=\frac{1-a}{1-2a}+\frac{\mu_{j}}{(1-2a)\lambda_{j}}$ and $k=\frac{1}{(1-2a)\lambda_{j}}$, we obtain
\begin{eqnarray}
I_{j}&=&\left[(1-a)\lambda_{j}+\mu_{j}-\frac{1}{2}\right]\nonumber\\&&\ -\ n(1-2a)\lambda_{j}\left\{\psi\left((1-a)\lambda_{j}+\mu_{j}\right)-\log\left((1-2a)\lambda_{j}\right)+1-h_{n-1}\right\}\nonumber\\
&&+\ a_{n}\left(\frac{1-a}{1-2a}+\frac{\mu_{j}}{(1-2a)\lambda_{j}},\frac{1}{(1-2a)\lambda_{j}}\right),\nonumber
\end{eqnarray}
where the $a_{n}$ are exponentially small and $a_{n}=O(1)$ (see. \cite[Proposition 4.3]{maz} for an explicit expression of $a_{n}$). Here, $h_{n} = 1 +\frac{1}{2}+\frac{1}{3}+...+\frac{1}{n}$ is a harmonic number, and $\psi(x)$ is the logarithm derivative of the Gamma function. Therefore
\begin{eqnarray}\label{eq.Ij}
I_{j}&=&\left[(1-a)\lambda_{j}+\mu_{j}-\frac{1}{2}\right]\nonumber\\
&&\ -n(1-2a)\lambda_{j}\left[\psi\left((1-a)\lambda_{j}+\mu_{j}\right)-\log\left((1-2a)\lambda_{j}\right)+1-h_{n-1}\right]+O(1).\nonumber\\
\end{eqnarray}
Summing \eqref{eq.Ij} over $j$, we get
\begin{eqnarray}\label{eq.sumIj}
\sum_{j=1}^{r}I_{j}&=&\sum_{j=1}^{r}\left[(1-a)\lambda_{j}+\mu_{j}-\frac{1}{2}\right]\nonumber\\
&&\ -n(1-2a)\sum_{j=1}^{r}\lambda_{j}\left[\psi\left((1-a)\lambda_{j}+\mu_{j}\right)-\log\left((1-2a)\lambda_{j}\right)+1-h_{n-1}\right]+O(1).\nonumber\\
\end{eqnarray}
Using the expression $$\psi(z)=-\gamma-\frac{1}{z}+\sum_{l=1}^{+\infty}\frac{z}{l(l+z)},$$
where $\gamma$ is the Euler constant and the estimate
$$h_{n}=\log n-\gamma+\frac{1}{2n}+O\left(\frac{1}{2n^{2}}\right),$$
we deduce from \eqref{eq.arit3}, \eqref{eq.sumIj} and for $a<0$
$$\left[2-\left(1-\frac{2a-1}{a}\right)^{n}-\left(1+\frac{2a-1}{1-a}\right)^{n}\right]=O(1)$$
 that
\begin{eqnarray}
&&\lambda_{F}(-n,a)=\left(\sum_{j=1}^{r}\lambda_{j}\right)n(1-2a)\log n\nonumber\\
&&\ +\ n(1-2a)\left\{\left(\sum_{j=1}^{r}\lambda_{j}\right)(\gamma-1)+\log Q_{F}-\sum_{j=1}^{r}\lambda_{j}\log[(1-2a)\lambda_{j}]+C_{F}(a)\right\}\nonumber\\&&\ +\ S_{F}(n,a)+O(1).\nonumber\\
\end{eqnarray}
With the notation $d_{F}=\sum_{j=1}^{r}\lambda_{j}$ and $\lambda=\prod_{j=1}^{r}\lambda_{j}^{2\lambda_{j}}$, we obtain
\begin{eqnarray}
\lambda_{F}(-n,a)&=&(1/2-a)d_{F}n\log n\nonumber\\
&&\ +\ n(1-2a)\left\{\frac{d_{F}}{2}\left[\gamma-1-\log(1-2a)\right]+\frac{1}{2}\log (\lambda Q_{F})^{2}+C_{F}(a)\right\}\nonumber\\&&\ +\ S_{F}(n,a)+O(1),\nonumber\\
\end{eqnarray}
where
$$S_{F}(n,a)=-\ \sum_{l=1}^{n}\left(_{l}^{n}\right)\frac{(2a-1)^{l}}{(l-1)!}\lim_{X\rightarrow+\infty}\left\{\sum_{k\leq X}\frac{\Lambda_{F}(k)}{k^{1-a}}\log^{l-1}k-m_{F}(l-1)!X^{a}\sum_{k=0}^{l-1}\frac{\log^{k}X}{k!(-a)^{l-k}}\right\}.$$
A simple adaptation of the argument used in \cite[Lemma 4.4]{maz} yields to that, if the Riemann hypothesis holds for $F\in{\widetilde{{\mathcal S}}}$, then
$$S_{F}(n,a)=O\left(\sqrt{n}\log n\right).$$
Now, collecting all results above and noting that $\lambda_{F}(-n,a)=\overline{\lambda_{F}(n,a)}$, we deduce that the Riemann hypothesis implies
\begin{eqnarray}
\lambda_{F}(n,a)&=&(1/2-a)d_{F}n\log n\nonumber\\
&&\ +\ n(1-2a)\left\{\frac{d_{F}}{2}\left[\gamma-1-\log(1-2a)\right]+\frac{1}{2}\log (\lambda Q_{F})^{2}+C_{F}(a)\right\}\nonumber\\&&\ +\ O\left(\sqrt{n}\log n\right).\nonumber
\end{eqnarray}
Conversely, if
\begin{eqnarray}
\lambda_{F}(n,a)&=&(1/2-a)d_{F}n\log n\nonumber\\
&&\ +\ n(1-2a)\left\{\frac{d_{F}}{2}\left[\gamma-1-\log(1-2a)\right]+\frac{1}{2}\log (\lambda Q_{F})^{2}+C_{F}(a)\right\}\nonumber\\&&\ +\ O\left(\sqrt{n}\log n\right).\nonumber
\end{eqnarray}
then, $\lambda_{F}(n,a)$ grows polynomially in $n$. Therefore, if the Riemann hypothesis is false then from Theorem \ref{th.mod}, some modified Li
coefficients become exponentially large in $n$ and negative and the asymptotic formula of  $\lambda_{F}(n,a)$ rules
out.
\end{proof}
\section{Another proofs of Theorems \ref{th.arith} and \ref{th.asymp}}
The arithmetic formula proved in Theorem \ref{th.arith} can be obtained by another way by using that the coefficients
$$\lambda_{F}(n,a)=\sum_{\rho\in{Z(F)}}\left[1-\left(\frac{\rho-a}{\rho+a-1}\right)^{n}\right]
=\frac{1}{(n-1)!}\frac{d^{n}}{ds^{n}}\left[(s-a)^{n-1}\log\xi_{F}(s)\right]_{s=1-a}.$$
Therefore, the arithmetic formula \eqref{eq.arith2} can be deduced by the same argument used by Smajlovic in \cite[Appendix A]{smajl} or by the  author in \cite{OM3}. Next, we will separately investigate the asymptotic behavior of archimedean and non-archimedean contribution in
the arithmetic formula given in Theorem \ref{th.arith} or established in another way below. To do so, we use a recurrence relation
for the digamma function \cite[6.4.6]{A-S}, for $n\geq0$
$$\psi^{(n)}(z+1)=\psi^{(n)}(z)+(-1)^{n}n!z^{-n-1}$$
and
\begin{equation}\label{eq.psi}\psi^{(n)}(z)=(-1)^{n+1}n!\zeta(n+1,z)\end{equation}
for $z\neq0,-1,-2,...$ We have,
\begin{eqnarray}
\lambda_{F}(n,a)&=&\frac{1}{(n-1)!}\frac{d^{n}}{ds^{n}}\left[(s-a)^{n-1}\log\xi_{F}(s)\right]_{s=1-a}\nonumber\\
&=&\sum_{k=1}^{n}\left(_{k}^{n}\right)\frac{(1-2a)^{k}}{(k-1)!}\left[\frac{d^{k}}{ds^{k}}\log\xi_{F}(s)\right]_{s=1-a}\nonumber
\end{eqnarray}
Using that
$$\frac{\xi'_{F}}{\xi_{F}}(s)=\frac{m_{F}}{s}+\frac{m_{F}}{s-1}+\log Q_{F}+\frac{F'}{F}(s)+\sum_{j=1}^{r}\frac{\Gamma'}{\Gamma}(\lambda_{j}s+\mu_{j}),$$
we get
\begin{eqnarray}\label{eq.art2}
\lambda_{F}(n,a)&=&\sum_{k=1}^{n}\left(_{k}^{n}\right)\frac{(1-2a)^{k}}{(k-1)!}\left[\frac{d^{k-1}}{ds^{k-1}}\frac{F'}{F}(s)\right]_{s=1-a}+m_{F}\sum_{k=1}^{n}\left(_{k}^{n}\right)\frac{(1-2a)^{k}}{(1-a)^{k}}\nonumber\\
&&\ +\ \sum_{k=1}^{n}\left(_{k}^{n}\right)\frac{(1-2a)^{k}}{(k-1)!}\ \delta_{k,1}\ \log Q_{F}-m_{F}\sum_{k=1}^{n}\left(_{k}^{n}\right)\frac{(1-2a)^{k}}{(-a)^{k}}\nonumber\\
&&\ +\ \sum_{k=1}^{n}\left(_{k}^{n}\right)\frac{(1-2a)^{k}}{(k-1)!}\sum_{j=1}^{r}\lambda_{j}^{k}\psi^{(k-1)}(\lambda_{j}(1-a)+\mu_{j})\nonumber\\
&=& m_{F}\left[2-\left(1-\frac{2a-1}{a}\right)^{n}-\left(1+\frac{2a-1}{1-a}\right)^{n}\right]+ n(1-2a)\log Q_{F}\nonumber\\
&&\ +\ \sum_{k=1}^{n}\left(_{k}^{n}\right)(1-2a)^{k}\eta_{k-1}(F,1-a)\nonumber\\
&&\ +\ \sum_{k=1}^{n}\left(_{k}^{n}\right)\frac{(1-2a)^{k}}{(k-1)!}\sum_{j=1}^{r}\lambda_{j}^{k}\psi^{(k-1)}(\lambda_{j}(1-a)+\mu_{j}),
\end{eqnarray}
where $\frac{F'}{F}(s)=\sum_{l=-1}^{+\infty}\eta_{k-1}(s-(1-a))^{l}$ is the Laurent expansion at $s=1-a$ and $\delta_{k,1}=1$ if $k=1$ and 0 otherwise. In particular, $\eta_{-1}=-m_{F}$ if $F$ has a pole at $s=1-a$ and $\eta_{-1}=0$ otherwise. It's easy to see that, for $a<0$
\begin{eqnarray}\label{eq.eta}
\eta_{k-1}&=&\frac{1}{(k-1)!}\left[\frac{d^{k-1}}{ds^{k-1}}\left(\frac{m_{F}}{s-1+a}+\frac{F'}{F}(s)\right)\right]_{1-a}\nonumber\\
&=&\ -\ \frac{1}{(k-1)!}\lim_{X\rightarrow+\infty}\left\{\sum_{k\leq X}\frac{\Lambda_{F}(k)}{k^{1-a}}\log^{l-1}k-m_{F}(l-1)!X^{a}\sum_{k=0}^{l-1}\frac{\log^{k}X}{k!(-a)^{l-k}}\right\}.\nonumber\\
\end{eqnarray}
Furthermore, the last sum in equation \eqref{eq.art2} can be written as follows
\begin{eqnarray}\label{eq.26}
&&\sum_{k=1}^{n}\left(_{k}^{n}\right)\frac{(1-2a)^{k}}{(k-1)!}\sum_{j=1}^{r}\lambda_{j}^{k}\psi^{(k-1)}(\lambda_{j}(1-a)+\mu_{j})\nonumber\\
&=&(1-2a)n\sum_{j=1}^{r}\lambda_{j}\psi(\lambda_{j}(1-a)+\mu_{j})\nonumber\\
&&\ +\ \sum_{k=2}^{n}\left(_{k}^{n}\right)(1-2a)^{k} \sum_{j=1}^{r}(-\lambda_{j})^{k}\sum_{m=0}^{+\infty}\frac{1}{(\lambda_{j}(1-a)+\mu_{j}+m)^{k}}.
\end{eqnarray}
Then, from equations \eqref{eq.art2}, \eqref{eq.eta}, \eqref{eq.26}, we find the arithmetic formula given in Theorem \ref{th.arith}.\\

Now, to prove the asymptotic formula given in Theorem \ref{th.asymp}, we write equation \ref{eq.art2}
$$\lambda_{F}(n,a)=S_{Arch}+S_{NArch},$$
where
$$S_{Arch}= n(1-2a)\log Q_{F}+\sum_{k=1}^{n}\left(_{k}^{n}\right)\frac{(1-2a)^{k}}{(k-1)!}\sum_{j=1}^{r}\lambda_{j}^{k}\psi^{(k-1)}(\lambda_{j}(1-a)+\mu_{j})$$
and
$$S_{NArch}=m_{F}\left[2-\left(1-\frac{2a-1}{a}\right)^{n}-\left(1+\frac{2a-1}{1-a}\right)^{n}\right] + \sum_{k=1}^{n}\left(_{k}^{n}\right)(1-2a)^{k}\eta_{k-1}(F,1-a).$$
Formula \eqref{eq.psi} yields to
\begin{eqnarray}
&&\sum_{k=1}^{n}\left(_{k}^{n}\right)\frac{(1-2a)^{k}}{(k-1)!}\sum_{j=1}^{r}\lambda_{j}^{k}\psi^{(k-1)}(\lambda_{j}(1-a)+\mu_{j})\nonumber\\
&=&\sum_{k=1}^{n}\left(_{k}^{n}\right)\frac{(1-2a)^{k}}{(k-1)!}\sum_{j=1}^{r}\lambda_{j}^{k}\psi^{(k-1)}(\lambda_{j}(1-a)+\mu_{j}+1)\nonumber\\
&&\ +\ \sum_{k=1}^{n}\left(_{k}^{n}\right)\frac{(1-2a)^{k}}{(k-1)!}\sum_{j=1}^{r}\lambda_{j}^{k}(-1)^{k-1}(k-1)!(\lambda_{j}(1-a)+\mu_{j})^{-k}\nonumber\\
&=&(1-2a)n\sum_{j=1}^{r}\lambda_{j}\psi(\lambda_{j}(1-a)+\mu_{j}+1)\nonumber\\
&&\ +\ \sum_{k=2}^{n}\left(_{k}^{n}\right)\left(-(1-2a)\lambda_{j}\right)^{k}\sum_{j=1}^{r}
\zeta(k,\lambda_{j}(1-a)+\mu_{j}+1)\nonumber\\
&&\ +\ \sum_{j=1}^{r}\left(\frac{a\lambda_{j}+\mu_{j}}{\lambda_{j}(1-a)+\mu_{j}}\right)^{n}-r.
\end{eqnarray}
Then
\begin{eqnarray}\label{eq.arch1}
S_{Arch}&=&n(1-2a)\log Q_{F}+(1-2a)n\sum_{j=1}^{r}\lambda_{j}\psi(\lambda_{j}(1-a)+\mu_{j}+1)\nonumber\\
&&\ +\ \sum_{k=2}^{n}\left(_{k}^{n}\right)\left(-(1-2a)\lambda_{j}\right)^{k}\sum_{j=1}^{r}
\zeta(k,\lambda_{j}(1-a)+\mu_{j}+1)\nonumber\\
&&\ +\ \sum_{j=1}^{r}\left(\frac{a\lambda_{j}+\mu_{j}}{\lambda_{j}(1-a)+\mu_{j}}\right)^{n}-r.
\end{eqnarray}
Let
$$\sum_{j=1}^{r}\sum_{k=2}^{n}\left(_{k}^{n}\right)\left(-(1-2a)\lambda_{j}\right)^{k}
\zeta(k,\lambda_{j}(1-a)+\mu_{j}+1)=\sum_{j=1}^{r}S_{Arch}(n,r).$$
To estimate  $S_{Arch}(n,r)$, we follow the proof  of the analogous theorem proved in \cite[Theorem 2]{O-S-ram}. Calculus of residues implies that
\begin{eqnarray}\label{eq.res}S_{Arch}(n,r)&=&\frac{(-1)^{n}}{2\pi i}n!\int_{R}\frac{\Gamma(s-n)}{\Gamma(s+1)}\left((1-2a)\lambda_{j}\right)^{k}
\zeta(k,\lambda_{j}(1-a)+\mu_{j}+1)ds\nonumber\\
&=&\frac{(-1)^{n}}{2\pi i}n!\int_{R}J_{r}(s)ds,
\end{eqnarray}
where $R$ is a positively oriented rectangle with vertices $3/2 \pm i$ and $n + 1/2 \pm i$ containing
simple poles $s = 2, 3,...,n$ of $J_{r}(s)$. Residues are
easily found using the fact that $Res_{s=-n}\Gamma(s)=\frac{(-1)^{n}}{n!}$, hence, for $k=2,...,n$, we get
$$Res_{s=k}J_{r}(s)=\frac{1}{k!}\left((1-2a)\lambda_{j}\right)^{k}\zeta(k,\lambda_{j}(1-a)+\mu_{j}+1)\frac{(-1)^{n-k}}{(n-k)!}.$$
This justify the equality \eqref{eq.res}. The function $J_{r}(s)$ is uniformly bounded on the real segment joining
$n + 1/2$ and $e^{n}$, hence, the rectangle $R$ can be deformed to the line
$(e^{n}-i\infty,e^{n}+i\infty)$. Further singularities of the function $J_{r}(s)$ are a
simple pole at $s = 0$ and a pole $s = 1$ of order 2. Therefore, we get
\begin{eqnarray}S_{Arch}(n,r)&=&(-1)^{n-1}n!(Res_{s=0}J_{r}(s)+Res_{s=1}J_{r}(s))\nonumber\\
&&\ +\ O\left(n!\int_{-\infty}^{+\infty}\frac{\Gamma(e^{n}+it-n)}{\Gamma(e^{n}+it+1)}
\left((1-2a)\lambda_{j}\right)^{e^{n}}\zeta(e^{n},\lambda_{j}(1-a)+\mu_{j}+1)dt\right).\nonumber
\end{eqnarray}
Since $1-a\geq1$ and $Re(\mu_{j})\geq0$,  as $n\rightarrow\infty$, we have
$$\left|\left((1-2a)\lambda_{j}\right)^{e^{n}}\zeta(e^{n},\lambda_{j}(1-a)+\mu_{j}+1)\right|=o(1).$$
Using the functional equation for the gamma function with the reflection formula, we get
\begin{eqnarray}n!\int_{-\infty}^{+\infty}\left|\frac{\Gamma(e^{n}+it-n)}{\Gamma(e^{n}+it+1)}\right|dt&\leq&
2n!\int_{0}^{+\infty}\frac{dt}{((e^{n}-n)^{2}+t^{2})^{n/2}}\nonumber\\
&&\ =\ \frac{2n!}{(e^{n}-n)^{n-1}}\int_{0}^{+\infty}\frac{dt}{(1+t^{2})^{n/2}}\nonumber\\
&\leq&\frac{(n+1)^{n+1}e^{-(n+1)}}{(e^{n}-n)^{n-1}}\nonumber\\
&\leq&e^{-n}.\nonumber
\end{eqnarray}
Then
$$S_{Arch}(n,r)=(-1)^{n-1}n!(Res_{s=0}J_{r}(s)+Res_{s=1}J_{r}(s))+O(1).$$
Since $\zeta(0,x,)=-x+1/2$, we have
$$(-1)^{n-1}n!Res_{s=0}J_{r}(s)=-\zeta(0,\lambda_{j}(1-a)+\mu_{j}+1)=\lambda_{j}(1-a)+\mu_{j}+\frac{1}{2}.$$
Using Laurent series representations of the factors appearing in $J_{r}(s)$, we  find
{\small$$(-1)^{n-1}n!Res_{s=1}J_{r}(s)=n\left(\frac{1}{2}-a\right)\left(\frac{\Gamma'}{\Gamma}(n)-\frac{\Gamma'}{\Gamma}
(\lambda_{j}(1-a)+\mu_{j}+1) +\log\left(\frac{1}{2}-a\right)+\gamma-1\right).
$$}
The following formula
$$\frac{\Gamma'}{\Gamma}(n)=\log n -\frac{1}{2n}-\sum_{i=1}^{K}\frac{B_{2k}}{2k}n^{-2k}+O_{K}(n^{-1-2k}),$$
as $n\rightarrow+\infty$, yields to
\begin{eqnarray}(-1)^{n-1}n!Res_{s=1}J_{r}(s)&=&n\left(\frac{1}{2}-a\right)\log n \nonumber\\ &&\ + n\left(\frac{1}{2}-a\right)\left(-\frac{\Gamma'}{\Gamma}(\lambda_{j}(1-a)+\mu_{j}+1)+\log\left(\frac{1}{2}-a\right)+\gamma-1\right)\nonumber\\
&&\ -\ \frac{1}{4}(1-2a)-\left(\frac{1}{2}-a\right)\sum_{i=1}^{K}\frac{B_{2k}}{2k}n^{-2k}+O_{K}(n^{-1-2k}),\nonumber
\end{eqnarray}
as $n\rightarrow+\infty$. Now, from \eqref{eq.psi} and \eqref{eq.arch1}, we obtain
\begin{eqnarray}\label{eq.arch}
&&S_{Arch}=\left(\frac{1}{2}-a\right)d_{F}n\log n \nonumber\\
&&\ \ \ \ \ \ \ \ \ \ \ \ \ + \ \ n(1-2a)\left\{\frac{d_{F}}{2}\left[\gamma-1-\log(1-2a)\right]+\frac{1}{2}\log (\lambda Q_{F})^{2}+C_{F}(a)\right\}+O(1),\nonumber\\
\end{eqnarray}
where $$C_{F}(a)=\sum_{j=1}^{r}\sum_{k=0}^{+\infty}(a\lambda_{j})^{k}\zeta(k,\lambda_{j}+\mu_{j}),\ \ \lambda=\prod_{j=1}^{r}\lambda_{j}^{2\lambda_{j}}$$
 and $\gamma$ is the Euler constant. \\

 Now, the non-archimedean contribution can be written in the form
 $$S_{NArch}=m_{F}\left[2-\left(1-\frac{2a-1}{a}\right)^{n}-\left(1+\frac{2a-1}{1-a}\right)^{n}\right] + \sum_{k=1}^{n}\left(_{k}^{n}\right)(1-2a)^{k}\eta_{k-1}(F,1-a).$$
 A simple adaptation of the argument used in \cite[Lemma 4.4]{maz} yields to that, if the Riemann hypothesis holds for $F\in{\widetilde{{\mathcal S}}}$, then
$$S_{NArch}=O\left(\sqrt{n}\log n\right).$$
Finally, we obtain the same formula given in Theorem \ref{th.asymp}, for $a<0$, we have
\begin{eqnarray}
\lambda_{F}(n,a)&=&(1/2-a)d_{F}n\log n\nonumber\\
&&\ +\ n(1-2a)\left\{\frac{d_{F}}{2}\left[\gamma-1-\log(1-2a)\right]+\frac{1}{2}\log (\lambda Q_{F})^{2}+C_{F}(a)\right\}\nonumber\\&&\ +\ O\left(\sqrt{n}\log n\right).\nonumber
\end{eqnarray}

\section{Concluding Remarks}
\begin{itemize}
	\item Suppose that the Riemann hypothesis holds and $\lambda_{F}(n,a)$ are real numbers otherwise we take the real part. Then, we have
	$$\lambda_{F}(n,a)=2\sum_{j=1}^{+\infty}\left[1-\cos(n\theta_{j})\right]\geq0,$$
	where  $\theta_{j}=\arctan\left(\frac{(1-2a)\gamma_{j}}{\gamma_{j}^{2}-a^{2}+a-\frac{1}{4}}\right)$ with $\rho=\frac{1}{2}+i\gamma_{j}$. We can write it as
		$$\lambda_{F}(n,a)=2\int_{0}^{+\infty}\left[1-\cos(n\theta(\gamma))\right]dN_{F}(\gamma),$$
	where the lower limit just as well way be taken as $\gamma_{1}$. Integrating by parts, we get
	\begin{eqnarray}\lambda_{F}(n,a)&=&-2n\int_{0}^{+\infty}\sin(n\theta(\gamma))N_{F}(\gamma)d\gamma\nonumber\\&&\ +\ \left[2(1-\cos(n\theta(\gamma))N_{F}(\gamma)\right]_{0}^{+\infty}.\nonumber
	\end{eqnarray}
	We made similar computation as in \cite[Equations (1.4),...,(1.9)]{coffey}, we get \footnote{These results can be obtained directly as in \cite[\S 4]{OOM1} from the identity
	$$\log\xi_{F}\left(\frac{\rho+a-1}{\rho-a}\right)=\log\xi_{F}(0)+\sum_{n=1}^{+\infty}\lambda_{F}(-n,a)\frac{(z-(1-a))^{n}}{n}.$$}
\begin{eqnarray}&&\lambda_{F}(n,a)=32(1-2a)n\nonumber\\&&\ \ \ \ \ \ \ \times\ \int_{0}^{+\infty}\frac{\gamma N_{F}(\gamma)}{(4\gamma^{2}-4a^{2}+4a+1)^{2}}U_{n-1}\left(\frac{4\gamma^{2}-4a^{2}+4a-1}{4\gamma^{2}-4a^{2}+4a+1}\right)d\gamma,\nonumber
\end{eqnarray}
	where  $U_{n-1}$ are the Chebyschev polynomials of the second kind. Furthermore, using the following relation between the Chebyshev polynomials of the second kind and the
first kind: $$\int U_{n}(x) dx =\frac{1}{n+1}T_{n+1}(x),$$
 we obtain
 $$\lambda_{F}(n,a)=2(1-2a)\sum_{k=1}^{+\infty}\alpha_{k}\left[1-T_{n}\left(\frac{4\gamma_{k}^{2}-4a^{2}+4a-1}{4\gamma_{k}^{2}-4a^{2}+4a+1}\right)\right],$$
 where the $\alpha_{k}$ count the number of zeros with imaginary part $\gamma_{k}$ including the
multiplicities.
\item Using the integral formula, it is easy to refined the asymptotic formula for $\lambda_{F}(n,a)$ under the Riemann hypothesis (see for example Coffey paper \cite[Proposition 7]{coffey}.\\
	
\end{itemize}

The  author in \cite{maz1} or with Omar and Ouni in \cite{OOM1,OOM2} conjectured
that the Li coefficients for some $L$-functions are increasing.
Then, a natural question is to see whether it remains true
for the modified Li coefficients $\lambda_{F}(n,a)$ with $a<0$. Furthermore, it is an interesting question to study how the modified Li coefficients of $L$-functions in the Selberg class (or sub-class) are distributed as well as the sum

$$\sum_{n\leq x}\lambda_{F}(n,a)$$ and state a relation between a  partial Li criterion which
relates the partial Riemann hypothesis to the positivity of the modified Li coefficients $\lambda_{F}(n,a)$ with $a<1/2$ up to a certain order $T>0$. It is also possible to extended the argument used in \cite{maz1} to the Selberg
class (or sub-class) to prove that the first modified Li coefficients are increasing using
the Bell polynomials without assuming the Riemann hypothesis.\\

These problems will be considered in a
sequel to this article.\\

\noindent{\bf Acknowledgements.}
The author thanks Prof. Sergey  Sekatskii for suggesting the
theme of this paper, giving some valuable advices (mainly on Section \ref{sec.3}) and more informations about his papers \cite{sek1,sek2}. Also,  the author would like to express his sincere gratitude
to Prof. Lajla Smajlovic for  many valuable suggestions which increased the clarity of the
presentation.

\end{document}